\documentclass[12pt,reqno]{amsart}
\usepackage[margin=2.55cm]{geometry}
\usepackage[T1]{fontenc}
\usepackage{lmodern}
\usepackage{amsmath,amssymb,amsthm,mathtools,mathrsfs}
\usepackage{enumitem}
\usepackage{microtype}
\usepackage{xcolor}
\usepackage[numbers,sort&compress]{natbib}
\usepackage{hyperref}
\usepackage{aliascnt}
\usepackage[nameinlink,noabbrev]{cleveref}

\hypersetup{
  colorlinks=true,
  linkcolor=blue!55!black,
  citecolor=blue!55!black,
  urlcolor=blue!55!black,
  pdftitle={Weak-A-infinity packing and estimates for fractional sparse forms on spaces of homogeneous type},
  pdfauthor={Hanwen Zhang}
}

\numberwithin{equation}{section}
\setlist[enumerate]{leftmargin=2.2em,itemsep=0.2em,topsep=0.25em}
\setlist[itemize]{leftmargin=2.0em,itemsep=0.2em,topsep=0.25em}

\newtheorem{theorem}{Theorem}[section]
\newaliascnt{proposition}{theorem}
\newtheorem{proposition}[proposition]{Proposition}
\aliascntresetthe{proposition}
\newaliascnt{lemma}{theorem}
\newtheorem{lemma}[lemma]{Lemma}
\aliascntresetthe{lemma}
\newaliascnt{corollary}{theorem}
\newtheorem{corollary}[corollary]{Corollary}
\aliascntresetthe{corollary}

\theoremstyle{definition}
\newaliascnt{definition}{theorem}
\newtheorem{definition}[definition]{Definition}
\aliascntresetthe{definition}
\newaliascnt{convention}{theorem}

\aliascntresetthe{convention}

\newtheorem*{conventionplain}{Convention}
\theoremstyle{remark}
\newaliascnt{remark}{theorem}

\aliascntresetthe{remark}
\newtheorem*{remarkplain}{Remark}

\crefname{theorem}{Theorem}{Theorems}
\Crefname{theorem}{Theorem}{Theorems}
\crefname{proposition}{Proposition}{Propositions}
\Crefname{proposition}{Proposition}{Propositions}
\crefname{lemma}{Lemma}{Lemmas}
\Crefname{lemma}{Lemma}{Lemmas}
\crefname{corollary}{Corollary}{Corollaries}
\Crefname{corollary}{Corollary}{Corollaries}
\crefname{remark}{Remark}{Remarks}
\Crefname{remark}{Remark}{Remarks}
\crefname{definition}{Definition}{Definitions}
\Crefname{definition}{Definition}{Definitions}
\crefname{convention}{Convention}{Conventions}
\Crefname{convention}{Convention}{Conventions}

\newcommand{\D}{\mathscr D}
\newcommand{\Sfam}{\mathcal S}
\newcommand{\Ffam}{\mathcal F}
\newcommand{\one}{\mathbf 1}
\newcommand{\dd}{\,\mathrm d}
\newcommand{\norm}[2]{\left\lVert #1\right\rVert_{#2}}
\newcommand{\avg}[1]{\left\langle #1\right\rangle}
\newcommand{\weakA}{A_{\infty}^{\mathrm{weak}}}

\newcommand{\Achar}{\mathfrak A}
\newcommand{\Nform}{\mathfrak N}
\newcommand{\Lform}{\mathscr L}
\newcommand{\eps}{\varepsilon}
\newcommand{\sgap}{\Delta}
\newcommand{\wideB}[1]{\widehat B_{#1}}

\title[Weak-$A_\infty$ packing and estimates]
{Weak-\texorpdfstring{$A_\infty$}{A-infinity} packing and estimates for fractional sparse forms on spaces of homogeneous type}
\author[H. Zhang]{Hanwen Zhang}
\address{Hanwen Zhang\\
Zhejiang Ocean University\\
Zhoushan 316022\\
People's Republic of China}
\email{hanwzhang@aliyun.com}
\date{July 21, 2026}
\subjclass[2020]{42B25, 42B35}
\keywords{Fractional sparse forms, two-weight inequalities, weak $A_\infty$ weights, sparse domination, spaces of homogeneous type}

\begin{document}

\begin{abstract}
We prove two-weight estimates for fractional sparse forms on spaces of homogeneous type.  The relevant transformed weights are assumed to belong to weak-$A_\infty$.  The weak reverse H\"older inequality is normalized on enlarged balls, while sparse testing is normalized on dyadic cubes.  To measure this loss, we introduce
$\mathfrak d_Q^w:=w(Q)/w(2\Lambda B_Q)$.  A packing estimate for the defect levels, combined with two-weight testing, gives the required mixed weak-$A_\infty$ bounds.
\end{abstract}

\maketitle

\section{Introduction and main results}

Sparse domination reduces many weighted problems to positive dyadic forms.
This applies to Calder\'on--Zygmund and fractional operators, maximal
operators, and square-function models; see \citet{Lerner2016,Lorist}.
For two weights, the resulting forms are treated by Sawyer-type testing for
positive dyadic operators \citep{Sawyer1988,LSUT,COV,HHL,FH}.

Sharp weighted estimates for classical fractional integral operators were
established by \citet{LMPT} in the Euclidean setting and subsequently studied by \citet{Kairema2014} on spaces of homogeneous type. In a broader operator-theoretic framework, \citet{BZ} proved sparse domination and quantitative two-weight estimates for fractional powers
$L^{-\alpha/\kappa}$ of sectorial operators whose semigroups satisfy suitable off-diagonal estimates. Their framework includes both the classical Riesz potential and fractional powers of divergence-form elliptic operators, and reduces the corresponding estimates to bounds for a general bilinear fractional sparse form. Under classical $A_\infty$ assumptions, they obtained mixed bounds for this form. The present paper takes this sparse-form problem as its starting point and extends the corresponding estimates to weak-$A_\infty$ weights on spaces of homogeneous type.

The full definitions and basic properties of spaces of homogeneous type, weak-$A_\infty$ weights, dyadic systems, and sparse families will be recalled in Section~2. Within the introduction, we only introduce essential definitions to illustrate our main theorem, as well as the proof strategy and the motivation behind the definition of the enlargement defects.

Let $(X,\rho,\mu)$ be a space of homogeneous type: $\rho$ is a quasi-metric
with quasi-triangle constant $\Lambda\ge1$, and $\mu$ is a doubling Borel
measure.  Fix one of the dyadic systems $\D$ of \citet{HK}, and let
$\Sfam\subset\D$ be a $1/2$-sparse family.

\begin{conventionplain}
	All sparse families considered below are finite or countable, and sums over countable sparse families are understood as monotone sums of nonnegative terms.  In the proofs of the main estimates, we first work with finite sparse families; the passage to countable families is justified by the monotone truncation argument in \cref{lem:finite-truncations}.
\end{conventionplain}

For such a family, we study the bilinear fractional sparse form
\begin{equation*}
	\Lform_{\Sfam}^{a,b,\gamma}(f,g)
	:=\sum_{Q\in\Sfam}
	\avg{f}_{a,Q}\avg{g}_{b',Q}\mu(Q)^{1+\gamma},
\end{equation*}
where
\begin{equation*}
	\avg{h}_{t,Q}
	:=\left(\frac1{\mu(Q)}\int_Q|h|^t\,\dd\mu\right)^{1/t},
	\qquad 0<t<\infty,
\end{equation*}
with the usual essential-supremum interpretation when $t=\infty$.
For a pair of weights $(\omega,\sigma)$, define
\begin{equation*}
	\Nform_{\Sfam}^{a,b,\gamma;p,q}(\omega,\sigma)
	:=\sup_{0\ne f,g\ge0}
	\frac{\Lform_{\Sfam}^{a,b,\gamma}(f,g)}
	{\norm{f}{L^p(\omega)}\norm{g}{L^{q'}(\sigma)}},
\end{equation*}
and introduce the transformed weights
\begin{equation*}
	u:=\omega^{-\frac{a}{p-a}},
	\qquad
	v:=\sigma^{-\frac{b'}{q'-b'}}.
\end{equation*}
Our aim is to control this norm quantitatively under weak-$A_\infty$ assumptions on $u$ and $v$. The passage from classical $A_\infty$ to weak-$A_\infty$ is not a mechanical replacement of weight characteristics.  In the classical setting, the Fujii--Wilson condition and the disjoint sparse major subsets give the cube-local packing estimate
\[
\sum_{\substack{Q\in\Sfam\\Q\subseteq R}}w(Q)
\lesssim
\int_R M(w\one_R)\,\dd\mu
\lesssim
[w]_{A_\infty}w(R).
\]
For weak-$A_\infty$ on a space of homogeneous type, however, the
corresponding maximal and reverse H\"older estimates are normalized by
the weight of an enlargement of $R$.
By contrast, the Sawyer testing conditions for the sparse form are normalized by $w(R)$. 

A related use of
weak-$A_\infty$ estimates appears in the work of \citet{Li} on
Calder\'on--Zygmund operators. There, a quantitative power-bump estimate is
combined with the weak reverse H\"older inequality, which determines
admissible bump exponents and leads to bounds for the associated sparse
testing quantities. In the fractional setting considered here, we instead
work directly with the sparse form and encode the discrepancy between the
enlarged-cube and original-cube normalizations through enlargement defects.

We use the weak-$A_\infty$ class of \citet{AHT}, with dilation
$2\Lambda$:
\begin{equation*}
	[w]_{\weakA}
	:=\sup_B\frac1{w(2\Lambda B)}
	\int_B M(w\one_B)\,\dd\mu,
\end{equation*}
where balls with $w(2\Lambda B)=0$ are omitted.  Other fixed dilations larger
than $\Lambda$ give the same class, with quantitatively comparable
characteristics.

For every $Q\in\D$, choose an associated ball $B_Q$ from the dyadic
construction so that
\begin{equation*}
	Q\subseteq B_Q,
	\qquad
	\mu(B_Q)\simeq_X\mu(Q),
\end{equation*}
and put
\begin{equation*}
	\wideB{Q}:=2\Lambda B_Q.
\end{equation*}
Then $Q\subseteq\wideB{Q}$ and $\mu(\wideB{Q})\simeq_X\mu(Q)$.
In the present setting, the same sparse-major-subset argument gives only
\begin{align*}
	\sum_{\substack{Q\in\Sfam\\Q\subseteq R}}w(Q)
	&\lesssim_X \int_R M(w\one_R)\,\dd\mu \\
	&\le \int_{B_R}M(w\one_{B_R})\,\dd\mu
	\lesssim_X [w]_{\weakA}w(\wideB{R}).
\end{align*}
 Since weak-$A_\infty$ does not
provide a uniform comparison $w(\wideB{R})\lesssim w(R)$, the classical
cube-local packing argument does not close.

We define the ratio
\begin{equation*}
	\mathfrak d_Q^w:=
	\begin{cases}
		\dfrac{w(Q)}{w(\wideB{Q})},&w(\wideB{Q})>0,\\[0.8em]
		0,&w(\wideB{Q})=0.
	\end{cases}
\end{equation*}
and call it the \emph{enlargement defect}.  Individual defects can be
arbitrarily small.  The key geometric result proved in Section~2 is instead
the distributional estimate
\begin{equation}\label{eq:intro-defect-packing}
	\sum_{\substack{Q\in\Sfam,\ Q\subseteq R\\
			\mathfrak d_Q^w\ge t}}
	w(Q)
	\lesssim_X
	[w]_{\weakA}\bigl(1+\log_2(1/t)\bigr)w(R),
	\qquad 0<t\le1.
\end{equation}
Thus the defect levels have quantitative Carleson control even though no
cube-by-cube lower bound is available.

With this packing lemma for weak-$A_\infty$ weights, we obtain the following main theorem.

Let $\Sfam\subset\D$ be a finite or countable $1/2$-sparse family.  Assume
that $\omega$ and $\sigma$ are locally integrable and positive almost
everywhere.  Define
\begin{equation*}
	u:=\omega^{-\frac{a}{p-a}},
	\qquad
	v:=\sigma^{-\frac{b'}{q'-b'}}.
\end{equation*}
and assume that $u$ and $v$ are locally integrable.  Suppose
\begin{equation*}
	1\le p\le q<b\le\infty,
	\qquad q>1,
	\qquad 0<a<p,
	\qquad
	\frac1p-\frac1q\le\gamma\le\frac1a-\frac1b.
\end{equation*}
We use $1/\infty=0$ and $\infty'=1$, and write
\begin{equation*}
	\sgap:=\frac1p-\frac1q,
	\qquad
	\theta_u:=\frac1a-\frac1p,
	\qquad
	\theta_v:=\frac1q-\frac1b,
	\qquad
	\kappa:=\frac1a-\frac1b-\gamma.
\end{equation*}
For the outer balls above, define
\begin{align*}
	\Achar_{\Sfam}^{\gamma}(u,v)
	&:={}
	\sup_{Q\in\Sfam}
	\mu(\wideB{Q})^{-\kappa}
	u(\wideB{Q})^{\theta_u}v(\wideB{Q})^{\theta_v},\\
	\Achar^{\gamma}(u,v)
	&:={}
	\sup_B
	\mu(B)^{-\kappa}u(B)^{\theta_u}v(B)^{\theta_v},
\end{align*}
where the second supremum is over balls of positive finite measure.  Clearly,
\begin{equation*}
	\Achar_{\Sfam}^{\gamma}(u,v)
	\le\Achar^{\gamma}(u,v).
\end{equation*}

For convenience, write
\begin{equation*}
	\mathfrak m_{p,q}(U,V)
	:=\min\left\{
	U^{1/p}V^{1/p'},
	U^{1/q}V^{1/q'}
	\right\},
\end{equation*}
with the usual convention when $p=1$.  Whenever the corresponding
weak-$A_\infty$ assumption is in force, we abbreviate
$U:=[u]_{\weakA}$ and $V:=[v]_{\weakA}$.

We distinguish the following two mutually exclusive regimes:
\begin{align*}
	\text{\rm Case 1:}\qquad
	&\gamma<\frac1a-\frac1b
	\quad\text{and}\quad
	\bigl(p<q\ \text{or}\ \gamma=0\bigr),\\
	\text{\rm Case 2:}\qquad
	&\gamma=\frac1a-\frac1b
	\quad\text{or}\quad
	\bigl(p=q\ \text{and}\ \gamma>0\bigr).
\end{align*}

\begin{theorem}[Weak-$A_\infty$ estimate]
	\label{thm:weakA-bilinear-intro}
	Let the parameters, sparse family, weights, and transformed weights satisfy
	the hypotheses above. Assume that $u\in\weakA$. In Case~1, assume additionally
	that $v\in\weakA$ when $p>1$; in Case~2, assume that $v\in\weakA$. Then
	\begin{equation*}
		\Nform_{\Sfam}^{a,b,\gamma;p,q}(\omega,\sigma)
		\lesssim
		\Achar_{\Sfam}^{\gamma}(u,v)
		\begin{dcases}
			[u]_{\weakA}^{1/q}
			+\one_{\{p>1\}}[v]_{\weakA}^{1/p'},
			& \text{\rm Case 1},\\[1em]
			[u]_{\weakA}^{1/q}\mathfrak m_{p,q}(U,V)^{1/q'}
			+\one_{\{p>1\}}
			[v]_{\weakA}^{1/p'}\mathfrak m_{p,q}(U,V)^{1/p},
			& \text{\rm Case 2}.
		\end{dcases}
	\end{equation*}
	The implicit constant depends only on the exponents and the structural
	constants of the space. In both cases,
	$\Achar_{\Sfam}^{\gamma}(u,v)$ may be replaced by
	$\Achar^{\gamma}(u,v)$ whenever the latter is finite.
\end{theorem}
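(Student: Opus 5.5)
The plan is to follow the Beltran--Zhang reduction of the bilinear form to a one-weight-per-side positive dyadic operator. We then apply two-weight Sawyer-type testing and verify the testing conditions with the defect packing estimate \eqref{eq:intro-defect-packing} in place of the classical cube-local $A_\infty$ packing.

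\textbf{Step 1 (reduction).} Substitute $f=h\,u^{1/a}$ with $\norm{f}{L^p(\omega)}=\norm{h}{L^{p/a}(u)}$ after writing $|f|^a=|f|^a\omega^{a/p}\omega^{-a/p}$ and applying H\"older with exponents $p/a$ and $(p/a)'$. Do the same on the $g$ side with $b'$, $q'$ and $v$. This gives
\[
\avg{f}_{a,Q}\le\Bigl(\frac{1}{\mu(Q)}\int_Q |f|^a\omega^{a/p}\,\dd\mu\Bigr)^{\!\cdots}
\cdot\Bigl(\frac{u(Q)}{\mu(Q)}\Bigr)^{\theta_u}.
\]
It is cleaner to write $\avg{f}_{a,Q}\le \avg{\phi}^{u}_{Q}{}^{1/\cdots}\,(u(Q)/\mu(Q))^{1/a}$, and the analogous bound for $g$. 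Then insert $\mu(Q)^{\gamma}$ and bound
\[
\mu(Q)^{-\kappa}u(Q)^{\theta_u}v(Q)^{\theta_v}\le C_X\,\Achar_{\Sfam}^{\gamma}(u,v),
\]
using $Q\subseteq\wideB{Q}$, $\mu(\wideB{Q})\simeq\mu(Q)$ and $\kappa\ge0$. What remains is a form
\[
\sum_{Q\in\Sfam}\bigl(\langle F\rangle^u_Q\bigr)\bigl(\langle G\rangle^v_Q\bigr)\,u(Q)^{\alpha}v(Q)^{\beta}
\]
with $F\in L^{p/a}(u)$-type and $G\in L^{q'/b'}(v)$-type data. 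Its exponents $\alpha,\beta$ satisfy $\alpha+\beta\ge 1$ with a gap governed by $\Delta$ and $\kappa$. By Case~1 versus Case~2 we split into a ``gap'' regime and a ``critical'' regime (no gap, $\kappa=0$ or $p=q$).

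\textbf{Step 2 (testing).} For the resulting positive dyadic operator, invoke the two-weight testing theorem for sparse forms (Lacey--Sawyer--Uriarte-Tuero, Cascante--Ortega--Verbitsky, Hyt\"onen) to bound the norm by the two testing constants. The first tests $\one_R u$ against $L^{q}(v)$, normalized by $u(R)^{1/p}$; the second is the dual one. In Case~1 (with $p<q$ or $\gamma=0$), the surplus power of $\mu$ or the Hardy--Littlewood--Sobolev-type gap gives geometric decay along chains. There it should suffice to control sums of the form $\sum_{Q\subseteq R} u(Q)$. In Case~2 this route is unavailable, and we must use the full mixed testing, which produces the $\mathfrak m_{p,q}(U,V)$ factors. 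These arise by running the Hyt\"onen--P\'erez style argument twice, once with exponent $p$ and once with $q$, and taking the minimum.

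\textbf{Step 3 (packing with defects), the main obstacle.} Each testing sum reduces to $\sum_{Q\subseteq R}w(Q)\,\Phi_Q$, where $\Phi_Q$ involves ratios like $w(Q)/w(\wideB{Q})$ coming from the replacement of $Q$ by $\wideB{Q}$ in Step 1. The key idea is to keep the $\Achar$ normalized on $\wideB{Q}$ but the averages on $Q$, so the leftover factor is a power $(\mathfrak d_Q^w)^{s}$ with $s>0$. Decompose by dyadic defect levels $2^{-k-1}<\mathfrak d^w_Q\le2^{-k}$. By \eqref{eq:intro-defect-packing}, level $k$ contributes at most $[w]_{\weakA}(1+k)2^{-ks}w(R)$, and summing in $k$ gives $\lesssim_s[w]_{\weakA}w(R)$. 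The difficulty is arranging that a positive power of the defect genuinely appears in every testing term, especially in Case~2 where no gap is available. There I would try to first pass to stopping cubes (principal cubes with respect to $u$-averages). The defect then enters through the Carleson embedding, and the logarithmic loss is absorbed by the doubling of the stopping averages. Finally, countable families follow from \cref{lem:finite-truncations}, and $\Achar_\Sfam^\gamma\le\Achar^\gamma$ gives the last claim.
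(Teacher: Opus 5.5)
\textbf{Gap: the descendant sums in the testing norms.} You have the paper's architecture: reduction to the testing constants $\zeta,\zeta^*$, the factorization $K_Q\lesssim A\,(\mathfrak d_Q^u)^{\theta_u}(\mathfrak d_Q^v)^{\theta_v}$ from \eqref{eq:defect-factor-upper}, and summation of defect levels against \eqref{eq:intro-defect-packing}. What is missing is the step where the real work happens.

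When you evaluate a test by the tree estimate (\cref{lem:laminar-operator}), you get
$\norm{T_Ru}{L^q(v)}^q\lesssim\sum_{Q\subseteq R}\lambda_Q\,(I_Q/v(Q))^{q-1}$.
Here $I_Q=\sum_{P\in\Sfam,\,P\subseteq Q}\lambda_P$ and $\lambda_P=\mu(P)^{-\kappa}u(P)^{1/a}v(P)^{1/b'}$. Your Step~3 only treats outer sums $\sum_{Q\subseteq R}(\mathfrak d_Q^w)^s w(Q)$ and never bounds $I_Q$.

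Worse, your Step~1 bounds $K_Q=\mu(Q)^{-\kappa}u(Q)^{\theta_u}v(Q)^{\theta_v}\le C_X\Achar_{\Sfam}^{\gamma}(u,v)$ outright. This discards the defect factor. In Case~1 it also discards the positive $\mu$-power that only a partial use of $\Achar$ can release. What remains, $\sum_{P\subseteq Q}u(P)^{1/p}v(P)^{1/q'}$, can only be controlled through weighted decay, and weak-$A_\infty$ normalizes that decay on $\wideB{Q}$, not on $Q$.

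\textbf{The missing idea in Case~1.} Spend only part of $K_P$. Choose $d>1$ close to $1$ with $d<q'$ and $(d-1)(\gamma-\sgap)\le\sgap$, and write $\lambda_P=K_P^{d}\,\mu(P)^{\kappa(d-1)}u(P)^{1/a-d\theta_u}v(P)^{1/b'-d\theta_v}$. Bound $K_P^d\lesssim A^d$. The remaining monomial has positive $\mu$-exponent and total exponent $1+\sgap-(d-1)(\gamma-\sgap)\ge1$, so it is summed by sparsity alone (\cref{lem:sparse-monomial}), with no $A_\infty$ input. The leftover power $K_Q^{\,q-d(q-1)}$ in the outer term then gives the defect moment with $\eta_u=(q-d(q-1))\theta_u>0$, hence $\zeta\lesssim A\,U^{1/q}$. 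If you instead put defect factors into $I_Q$ as well, you pay weak-$A_\infty$ constants in the inner sums too (including $V$ when $p>1$). You then reach only a Case-2-type bound such as $A\,U^{1/q}\mathfrak m_{p,q}(U,V)^{1/q'}$, not the Case~1 claim.

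\textbf{Case~2.} Your stopping-time/principal-cube plan is unnecessary and unspecified; it does not say where $\mathfrak m_{p,q}(U,V)$ comes from. The regime you flag as hard is in fact the easy one: the defects enter at full strength and no $\mu$-power needs to be saved. The paper's route is as follows.
\begin{enumerate}
\item Write $\lambda_P\lesssim A\,(\mathfrak d_P^u)^{\theta_u}u(P)^{1/p}\cdot(\mathfrak d_P^v)^{\theta_v}v(P)^{1/q'}$ and apply H\"older to $I_Q$ with exponents $s,s'$, where $s\in[p,q]\cap(1,\infty)$.
\item Use $u(P)^{s/p}\le u(Q)^{s/p-1}u(P)$ and $v(P)^{s'/q'}\le v(Q)^{s'/q'-1}v(P)$; this is why $s$ must lie in $[p,q]$.
\item Apply \cref{cor:defect-moment-packing} to each weight, obtaining $I_Q\lesssim A\,U^{1/s}V^{1/s'}u(Q)^{1/p}v(Q)^{1/q'}$.
\item The outer term carries $K_Q$ to the first power and produces the $u$-moment (the $v$-moment for $\zeta^*$).
\item Since $\log(U^{1/s}V^{1/s'})$ is affine in $1/s$, the optimum is at $s=p$ or $s=q$, which is exactly $\mathfrak m_{p,q}(U,V)$.
\end{enumerate}
Your ``run the argument with $p$ and with $q$ and take the minimum'' is the right intuition for this endpoint optimization. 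Without the H\"older splitting of the inner sum, however, there is no argument behind it.
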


\begin{remarkplain}
	At $\gamma=1/a-1/b$, the normalizing measure power vanishes and
	\[
	\Achar^{\gamma}(u,v)
	=\sup_Bu(B)^{\theta_u}v(B)^{\theta_v}.
	\]
	On an unbounded space this may be infinite even for elementary weights.
	The family-local estimate still applies to fixed or truncated sparse
	families; global applications require finiteness of the critical
	characteristic.
\end{remarkplain}

The remainder of the paper is organized as follows.  Section~2 develops the
weak-$A_\infty$ geometry of the dyadic outer balls and proves the defect
packing estimate \eqref{eq:intro-defect-packing}.  Section~3 combines this
packing result with Sawyer-type testing to prove
\cref{thm:weakA-bilinear-intro}, treating Case~1 and Case~2
separately.  In Section~4 we apply the bilinear estimates proved above to the
pointwise $\ell^r$-sparse operators studied by \citet{FH}.

\section{\texorpdfstring{Weak $A_\infty$ geometry and packing estimates}
{Weak A-infinity geometry and packing estimates}}

The framework of spaces of homogeneous type, introduced by Coifman and
Weiss \citep{CoifmanWeiss}, allows much of real-variable harmonic analysis
to be developed in the absence of a linear or Euclidean structure.

\begin{definition}[Space of homogeneous type]
A space of homogeneous type is a triple $(X,\rho,\mu)$, where $\rho$ is a
quasi-metric and $\mu$ is a doubling Borel measure:
\begin{enumerate}
 \item $\rho(x,y)=0$ if and only if $x=y$;
 \item $\rho(x,y)=\rho(y,x)$ for all $x,y\in X$;
 \item there is a constant $\Lambda\ge1$ such that
 \[
  \rho(x,z)\le \Lambda\bigl(\rho(x,y)+\rho(y,z)\bigr)
 \]
 for all $x,y,z\in X$;
 \item there is a constant $C_\mu\ge1$ such that
 \[
  \mu(B(x,2r))\le C_\mu\mu(B(x,r))
 \]
 for all $x\in X$ and $r>0$.
\end{enumerate}
Here $B(x,r):=\{y\in X:\rho(x,y)<r\}$, and $\lambda B(x,r):=B(x,\lambda r)$.  We assume that balls are Borel sets and that $0<\mu(B)<\infty$ for every ball.  The doubling property implies geometric doubling: every ball of radius $r$ can be covered by at most $N_X=N(\Lambda,C_\mu)$ balls of radius $r/2$.
\end{definition}

A \emph{weight} is a nonnegative locally integrable function.  Let $M$ be
the non-centered Hardy--Littlewood maximal operator over balls.  For a fixed
dilation parameter $\lambda\ge1$, set
\begin{equation*}
 [w]_{A_\infty^\lambda}
 :=\sup_B\frac1{w(\lambda B)}
 \int_BM(w\one_B)\,\dd\mu,
\end{equation*}
where the supremum is over balls $B$ with $w(\lambda B)>0$; balls with $w(\lambda B)=0$ make no contribution.

We shall use the following facts from \citep[Theorem~3.3, Proposition~3.9,
and Corollary~5.5]{AHT}.

\begin{proposition}\label{prop:AHT-properties}
The following assertions hold.
\begin{enumerate}
 \item If $\lambda,\lambda'>\Lambda$, then
 \[
  A_\infty^\lambda=A_\infty^{\lambda'}.
 \]
 Moreover, their characteristics are quantitatively comparable, with comparison constants depending only on $\Lambda,C_\mu,\lambda$, and $\lambda'$.  We therefore fix
 \[
  \weakA:=A_\infty^{2\Lambda},
  \qquad
  [w]_{\weakA}:=[w]_{A_\infty^{2\Lambda}}.
 \]
 \item Every nonzero $w\in\weakA$ satisfies
 \begin{equation}\label{eq:lowA}
  [w]_{\weakA}\ge c_{0,X}
  :=\frac1{2(2\Lambda)^{\log_2N_X}}.
 \end{equation}
 \item There are structural constants $\tau_X,C_{\mathrm{RH},X}>0$ such that, whenever $w\in\weakA$ and
 \[
  0<\eps\le\frac1{\tau_X[w]_{\weakA}},
 \]
 one has
 \begin{equation}\label{eq:rhi}
  \left(\frac1{\mu(B)}\int_Bw^{1+\eps}\,\dd\mu\right)^{1/(1+\eps)}
  \le \frac{C_{\mathrm{RH},X}}{\mu(2\Lambda B)}\int_{2\Lambda B}w\,\dd\mu
 \end{equation}
 for every ball $B$.
\end{enumerate}
\end{proposition}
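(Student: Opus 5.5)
The statement collects three facts from \citep{AHT}; my plan is to take items (1) and (3) essentially as cited and verify only their structural form, and to give a short self-contained argument for the lower bound (2), since its explicit constant is used later.

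For (1), assume $\Lambda<\lambda<\lambda'$; the inclusion $A_\infty^{\lambda}\subseteq A_\infty^{\lambda'}$ is trivial because $w(\lambda B)\le w(\lambda' B)$. For the converse I would fix a ball $B=B(x,r)$ and cover it by a bounded number $N=N(\Lambda,C_\mu,\lambda,\lambda')$ of balls $B_i$ of radius $\eta r$. Here $\eta$ is chosen so small that $\lambda' B_i\subseteq\lambda B$; this is possible because $\lambda>\Lambda$ and the quasi-triangle inequality. The difficulty is that $M(w\one_B)$ is not controlled by $\sum_i M(w\one_{B_i})$ restricted to $B_i$: there are long-range contributions where the averaging ball is large compared with $\eta r$. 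I would handle these as in \citep{AHT}, with a good-$\lambda$ or Calder\'on--Zygmund decomposition at the scale of $B$. The far part is bounded pointwise by a constant times $w(\lambda B)/\mu(B)$, whose integral over $B$ is $\lesssim w(\lambda B)$. I expect this localization to be the main technical obstacle, and would defer to \citep[Theorem~3.3]{AHT} for it.

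For (2), let $w\ne0$. For every ball $B$ and every $x\in B$ we have $M(w\one_B)(x)\ge w(B)/\mu(B)$, and hence
\[
\int_B M(w\one_B)\dd\mu\ge w(B).
\]
It therefore suffices to find a ball with $w(B)\ge 2c_{0,X}\,w(2\Lambda B)>0$. Suppose instead that $w(B)<2c_{0,X}w(2\Lambda B)$ for all balls. Cover $2\Lambda B$ by at most $(2\Lambda)^{\log_2 N_X}$ balls $B'$ of radius comparable to $r$, using geometric doubling iterated $\log_2(2\Lambda)$ times. Some $B'$ carries at least a fraction $(2\Lambda)^{-\log_2N_X}$ of $w(2\Lambda B)$. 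Iterating this downward produces nested-scale balls with $r\to0$ on which the averages of $w$ blow up geometrically. Combined with the assumed strict inequality, the factor $1/2$ in $c_{0,X}$ forces the averages to grow like $2^k$. This contradicts Lebesgue differentiation at a.e.\ point of a limit set where $w<\infty$. Bookkeeping of the constants here is routine.

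For (3), I would follow the Gehring-type argument of \citep[Corollary~5.5]{AHT}. First, the weak-$A_\infty$ condition gives, via a local Calder\'on--Zygmund decomposition of $w\one_B$ at heights $t\ge w(2\Lambda B)/\mu(B)$, a distributional estimate
\[
\int_{\{M(w\one_B)>t\}\cap B}w\,\dd\mu\lesssim t\,\mu\bigl(\{M(w\one_B)>t/C\}\cap B\bigr).
\]
Next, multiplying by $t^{\eps-1}$ and integrating in $t$, I would absorb the term $\eps\,[w]_{\weakA}\int_B M(w\one_B)^{1+\eps}$ into the left-hand side. This absorption is where the restriction $\eps\le 1/(\tau_X[w]_{\weakA})$ enters. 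What remains is $\int_B w^{1+\eps}\lesssim \mu(B)\bigl(w(2\Lambda B)/\mu(2\Lambda B)\bigr)^{1+\eps}$, after using $\mu(2\Lambda B)\simeq\mu(B)$ by doubling.
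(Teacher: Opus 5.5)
For (1) and (3) you do what the paper does. It proves nothing itself and takes all three items from Anderson--Hyt\"onen--Tapiola (Theorem~3.3, Proposition~3.9, Corollary~5.5), so deferring there is consistent. (Your Gehring sketch for (3) would not close as written. On the Calder\'on--Zygmund pieces the weak condition returns the $w$-measure of their enlargements, not of the level set, and handling that is exactly the work of the cited result.)

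The genuine gap is your self-contained argument for (2), where the stated mechanism yields no contradiction. The pigeonhole step produces a ball $B'$ of radius $r$ with $w(B')\ge 2c_{0,X}\,w(2\Lambda B)$. But your hypothesis applied to $B'$ concerns $w(2\Lambda B')$, a different ball of the same size, so nothing links consecutive steps. Moreover $2c_{0,X}\cdot(2\Lambda)^{\log_2N_X}=1$. The factor $\tfrac12$ that is supposed to generate $2^k$ was therefore already spent when you raised the threshold from $c_{0,X}$ to $2c_{0,X}$. In addition, a per-step cover costs $N_X^{\lceil\log_2(2\Lambda)\rceil}$, which can exceed $(2\Lambda)^{\log_2N_X}$. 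Finally, even if averages did blow up along one chain of shrinking balls, nothing would be contradicted. The chain converges to a single point, possibly a null set where $w$ is genuinely singular, and Lebesgue differentiation says nothing there.

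What works is the decay direction, summed over a full cover rather than along one branch. Let $s:=\sup\{w(B)/w(2\Lambda B):w(2\Lambda B)>0\}$. Your observation $\int_BM(w\one_B)\,\dd\mu\ge w(B)$ gives $[w]_{\weakA}\ge s$. Suppose $s<(2\Lambda)^{-\log_2N_X}$. Then $w(B)\le s\,w(2\Lambda B)$ for every ball (trivially when $w(2\Lambda B)=0$), hence $w(B(y,(2\Lambda)^{-k}r))\le s^k\,w(B(y,r))$ for all $y$. Cover $B(x_0,r)$ once, by iterated geometric doubling, with at most $N_X^{\lceil k\log_2(2\Lambda)\rceil}\le N_X(2\Lambda)^{k\log_2N_X}$ balls $B(y_i,(2\Lambda)^{-k}r)$ meeting $B(x_0,r)$. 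The quasi-triangle inequality gives $B(y_i,r)\subseteq B^*:=B(x_0,\Lambda(2\Lambda+1)r)$, so
\[
 w(B(x_0,r))\le N_X\bigl((2\Lambda)^{\log_2N_X}s\bigr)^k\,w(B^*)\longrightarrow0
 \qquad(k\to\infty),
\]
using only $w(B^*)<\infty$. Hence $w=0$ almost everywhere, a contradiction. This even gives $[w]_{\weakA}\ge(2\Lambda)^{-\log_2N_X}=2c_{0,X}$, so the factor $\tfrac12$ in \eqref{eq:lowA} is pure slack.
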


The reverse H\"older inequality in \eqref{eq:rhi} is formulated for balls, whereas the sparse forms and their testing conditions are indexed by dyadic cubes. To pass between these two geometries, we use the adjacent dyadic systems of Hyt\"onen and Kairema \citep[Theorems~2.2 and~4.1]{HK}.

\begin{theorem}\label{thm:HK-dyadic-systems}
Let $(X,\rho,\mu)$ be a space of homogeneous type.  There exist a scale parameter $0<\eta<1$, an integer $K\ge1$, and structural constants $0<c_{\D}\le C_{\D}<\infty$ and $C_{\mathrm{adj}}\ge1$ for which there are dyadic systems
\[
 \D^t=\bigcup_{k\in\mathbb Z}\D_k^t,
 \qquad t=1,\ldots,K,
\]
with each $\D_k^t$ countable, such that the following assertions hold.
\begin{enumerate}
 \item For every $t\in\{1,\ldots,K\}$ and $k\in\mathbb Z$, the cubes in $\D_k^t$ form a disjoint partition of $X$:
 \[
  X=\mathop{\dot\bigcup}_{Q\in\D_k^t}Q.
 \]
 \item If $Q\in\D_k^t$ and $P\in\D_\ell^t$ with $\ell\ge k$, then either $P\subseteq Q$ or $P\cap Q=\varnothing$.
 \item Every $Q\in\D_k^t$ has a center $z_Q\in X$ such that
 \begin{equation}\label{eq:cube-ball}
  B(z_Q,c_{\D}\eta^k)\subseteq Q
  \subseteq B(z_Q,C_{\D}\eta^k).
 \end{equation}
 \item For every ball $B=B(x,r)$ there exist $t\in\{1,\ldots,K\}$ and $Q\in\D^t$ such that
 \begin{equation*}
  B\subseteq Q,
  \qquad
  \operatorname{diam}_{\rho}(Q)\le C_{\mathrm{adj}}r.
 \end{equation*}
\end{enumerate}
The number $K$ and all the displayed constants depend only on the structural constants of the space.
\end{theorem}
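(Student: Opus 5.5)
Since \cref{thm:HK-dyadic-systems} is quoted from \citep[Theorems~2.2 and~4.1]{HK}, in the paper we would simply invoke it. For completeness, here is how we would reconstruct it. First fix a small scale parameter, say $\eta\le 1/(12\Lambda^4)$. For each $k\in\mathbb Z$, choose a maximal $\eta^k$-separated set of reference points $\{z^k_\alpha\}_\alpha$. Geometric doubling (the constant $N_X$) makes each such set countable and locally finite. Next impose nestedness, $\{z^k_\alpha\}\subseteq\{z^{k+1}_\beta\}$, by extending maximal nets level by level. Then define a parent relation $(k+1,\beta)\preceq(k,\alpha)$ by assigning each finer point to a nearest coarser point, with ties broken by a fixed well-ordering. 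This gives a tree.

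The cubes $Q^k_\alpha$ are the sets of points whose descendants eventually accumulate under $(k,\alpha)$, i.e. a hybrid of closures and interiors of the unions of descendant balls. This construction gives the partition property (1) at each level and the nestedness (2) directly from the tree structure. For the inner ball in (3), note that points within $c\eta^k$ of $z^k_\alpha$ cannot be assigned to any other ancestor at level $k$ because of separation. For the outer ball, sum the geometric series $\sum_{j\ge k}\Lambda^{j-k+1}\eta^j\lesssim\eta^k$, which converges because $\eta\Lambda$ is small. Together these give \eqref{eq:cube-ball} with $c_{\D},C_{\D}$ depending only on $\Lambda$ and $\eta$.

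For (4), we would build $K$ different systems $\D^t$ from perturbed choices of reference points. The aim is that for every $x$ and every $k$, there is some $t$ with $\rho(x,\partial Q)\gtrsim\eta^{k}$ for the level-$k$ cube $Q\in\D^t_k$ containing $x$. Granting this, given $B=B(x,r)$, pick $k$ with $\eta^{k+1}\lesssim r<\eta^k$ (shifted by a fixed number of levels). The cube from the good system then contains $B$, and $\operatorname{diam}Q\lesssim\eta^{k}\lesssim r$, which gives $C_{\mathrm{adj}}$. To choose the systems, label the points at each level with finitely many colors using geometric doubling, so that nearby points receive distinct labels. System $t$ then uses, at each level, the centers that are ``shifted'' according to the label $t$. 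The finiteness of $K$ comes from the bounded overlap count $N_X^{O(1)}$.

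The main obstacle is exactly this boundary-avoidance step. The partition and ball-containment properties are routine, but proving that finitely many systems suffice needs a careful combinatorial argument. That argument must show that a point near the boundary in one system is deep inside a cube in another, uniformly across all scales. This is the content of \citep[Theorem~4.1]{HK}, and we would follow their labeling construction rather than reprove it.
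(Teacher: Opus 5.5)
Your proposal is correct and matches the paper, which gives no proof of \cref{thm:HK-dyadic-systems} but simply quotes Hyt\"onen--Kairema (Theorems~2.2 and~4.1). Your reconstruction follows their construction (maximal separated nets, a parent order, closed/open/half-open cubes, and labelled shifted nets for the adjacent systems), and some steps are only heuristic: the half-open partition and the inner-ball inclusion need more than ``the tree structure'' and separation alone. Since you, like the paper, ultimately rest on the citation, nothing is missing.
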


The adjacent systems are useful when balls must be covered by comparable dyadic cubes. For the defect-packing argument itself, we fix one system and use only its laminar structure and the comparison between each cube and its associated outer ball. Write 
\[
 \D=\bigcup_{k\in\mathbb Z}\D_k.
\]
The family $\D$ is laminar, and each cube has one ancestor at every coarser
generation.  For $Q\in\D_k$, put
\begin{equation*}
 B_Q:=B(z_Q,C_{\D}\eta^k)
 \qquad\text{and}\qquad
 \wideB{Q}:=2\Lambda B_Q.
\end{equation*}
Then
\begin{equation*}
 Q\subseteq B_Q\subseteq\wideB{Q},
 \qquad
 \mu(\wideB{Q})\simeq_X\mu(Q).
\end{equation*}
The comparison follows from \eqref{eq:cube-ball} and doubling.  
The ball $\wideB{Q}$ is the enlargement in \eqref{eq:rhi}; the balls
$B_Q$ and $\wideB{Q}$ need not be nested as $Q$ varies.

\begin{definition}[Sparsity]
Let $0<\tau\le1$.  A finite or countable family $\Sfam\subset\D$ of distinct dyadic cubes is $\tau$-sparse if
\begin{equation*}
 \mu\left(
 Q\setminus\bigcup_{P\in\operatorname{ch}_{\Sfam}(Q)}P
 \right)\ge\tau\mu(Q)
\end{equation*}
for every $Q\in\Sfam$, where $\operatorname{ch}_{\Sfam}(Q)$ denotes the maximal strict members of $\Sfam$ contained in $Q$.  Equivalently,
\[
 \sum_{P\in\operatorname{ch}_{\Sfam}(Q)}\mu(P)
 \le(1-\tau)\mu(Q).
\]
Throughout the paper, ``sparse family'' means a family of distinct cubes satisfying this child-packing condition.
\end{definition}

The child-packing condition has two consequences that will be used
repeatedly.  It produces disjoint major subsets of the sparse cubes and, when iterated down the sparse tree, gives geometric decay in the underlying measure.

\begin{lemma}\label{lem:sparse-remainders}
Let $\Sfam\subset\D$ be a finite or countable $1/2$-sparse family, and define
\[
 F_Q:=Q\setminus
 \bigcup_{P\in\operatorname{ch}_{\Sfam}(Q)}P,
 \qquad Q\in\Sfam.
\]
Then the sets $\{F_Q:Q\in\Sfam\}$ are pairwise disjoint and
\[
 \mu(F_Q)\ge\frac12\mu(Q).
\]
Moreover, if $\operatorname{gen}_m(Q)$ denotes the $m$-th sparse generation below $Q$, then
\begin{equation*}
 \mu\left(\bigcup_{P\in\operatorname{gen}_m(Q)}P\right)
 \le2^{-m}\mu(Q),
 \qquad m\ge0.
\end{equation*}
\end{lemma}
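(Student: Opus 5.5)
The plan is to prove the three assertions of \cref{lem:sparse-remainders} in turn, using only the laminar structure of $\D$ and the child-packing condition defining $1/2$-sparsity.

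\emph{Disjointness.} Take distinct $Q,Q'\in\Sfam$. If $Q\cap Q'=\varnothing$ there is nothing to prove. Otherwise, by laminarity (\cref{thm:HK-dyadic-systems}(2)) one cube contains the other; say $Q'\subsetneq Q$. (Distinct cubes of one system can coincide as sets only across generations; I will treat $\Sfam$ as a family of sets, as the convention on distinct cubes permits.) The members of $\Sfam$ lying strictly between $Q'$ and $Q$ form a chain. This chain is finite, because their generations lie between those of $Q$ and $Q'$. Let $P$ be its maximal element, or $Q'$ itself if the chain is empty. Then $P\in\operatorname{ch}_{\Sfam}(Q)$, so $F_{Q'}\subseteq Q'\subseteq P$ is removed from $Q$, and hence $F_Q\cap F_{Q'}=\varnothing$.

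\emph{Lower bound.} The bound $\mu(F_Q)\ge\frac12\mu(Q)$ is exactly the definition of $1/2$-sparsity with $\tau=1/2$.

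\emph{Generational decay.} Define $\operatorname{gen}_0(Q)=\{Q\}$ and
\[
\operatorname{gen}_{m+1}(Q)=\bigcup_{P\in\operatorname{gen}_m(Q)}\operatorname{ch}_{\Sfam}(P).
\]
I will prove by induction on $m$ that the cubes of $\operatorname{gen}_m(Q)$ are pairwise disjoint and that $\sum_{P\in\operatorname{gen}_m(Q)}\mu(P)\le 2^{-m}\mu(Q)$. The case $m=0$ is trivial.

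For the inductive step, the children of a single cube are maximal elements, hence pairwise disjoint by laminarity. Children of different disjoint parents are contained in those parents, hence also disjoint. The child-packing inequality gives
\[
\sum_{R\in\operatorname{ch}_{\Sfam}(P)}\mu(R)\le\tfrac12\mu(P).
\]
Summing this over $P\in\operatorname{gen}_m(Q)$ gives the factor $2^{-(m+1)}$. Countable subadditivity then bounds the measure of the union by the sum.

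\emph{Main obstacle.} The only delicate point is the existence of the maximal element used in the disjointness step, that is, that every strict sparse descendant of $Q$ lies in some child. This follows from finiteness of the chain, which comes from the generation bound. Alternatively, one can use that $\mu(Q)<\infty$ together with the sparse decay, but the generation argument is cleaner.
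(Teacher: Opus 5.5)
Your proposal is correct and follows the same route as the paper: disjointness comes from laminarity together with the existence of a maximal proper sparse descendant of $Q$ containing $Q'$; the lower bound is the definition of $1/2$-sparsity; and the generational decay follows by induction from the child-packing inequality. Your argument that the chain of sparse cubes strictly between $Q'$ and $Q$ is finite (each dyadic generation contains at most one ancestor of $Q'$) fills in a point the paper leaves implicit.
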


\begin{proof}
If two sparse cubes are disjoint, then their remainders are disjoint.  If $P\subsetneq Q$, then $P$ is contained in a maximal proper sparse descendant of $Q$, and hence $P\cap F_Q=\varnothing$.  The lower bound for $\mu(F_Q)$ is the sparsity condition.  The generation estimate follows by induction from the child-packing inequality.
\end{proof}

The reverse H\"older inequality on $B_Q$ gives the following cube estimate.

\begin{corollary}\label{cor:weakA-set-decay}
Let $w\in\weakA$ be nonzero, and set
\[
 \beta_w:=\frac1{\tau_X[w]_{\weakA}+1}.
\]
Then
\begin{equation}\label{eq:beta}
 \beta_w\gtrsim_X\frac1{[w]_{\weakA}}.
\end{equation}
Moreover, for every $Q\in\D$ and every measurable $E\subset Q$,
\begin{equation}\label{eq:setdec}
 w(E)
 \lesssim_X w(\wideB{Q})
 \left(\frac{\mu(E)}{\mu(Q)}\right)^{\beta_w}.
\end{equation}
\end{corollary}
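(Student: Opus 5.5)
The plan is to apply Hölder's inequality on $Q\subseteq B_Q$ and then use the reverse Hölder inequality \eqref{eq:rhi} on the ball $B_Q$ with the largest admissible exponent. Fix a nonzero $w\in\weakA$ and set $\eps:=1/(\tau_X[w]_{\weakA})$, so that \cref{prop:AHT-properties}(3) applies.

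The bound \eqref{eq:beta} is immediate. By \eqref{eq:lowA}, $[w]_{\weakA}\ge c_{0,X}$, so
\[
\tau_X[w]_{\weakA}+1\le\bigl(\tau_X+c_{0,X}^{-1}\bigr)[w]_{\weakA}.
\]
The key observation for the rest is that $\beta_w=\eps/(1+\eps)$, because
\[
\frac{\eps}{1+\eps}=\frac{1}{1/\eps+1}=\frac{1}{\tau_X[w]_{\weakA}+1}.
\]
This is exactly the exponent produced by Hölder's inequality with the pair $(1+\eps,(1+\eps)/\eps)$.

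For \eqref{eq:setdec}, let $E\subset Q\subseteq B_Q$. Hölder's inequality gives
\[
w(E)\le\Bigl(\int_{B_Q}w^{1+\eps}\,\dd\mu\Bigr)^{1/(1+\eps)}\mu(E)^{\eps/(1+\eps)}.
\]
Applying \eqref{eq:rhi} to $B=B_Q$ bounds the first factor by
\[
C_{\mathrm{RH},X}\,\mu(B_Q)^{1/(1+\eps)}\,\frac{w(\wideB{Q})}{\mu(\wideB{Q})}.
\]
Combining the two displays yields
\[
w(E)\le C_{\mathrm{RH},X}\,w(\wideB{Q})\,\frac{\mu(B_Q)^{1-\beta_w}\,\mu(E)^{\beta_w}}{\mu(\wideB{Q})}.
\]

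The only remaining point, and the one I would check most carefully, is that the measure normalization can be converted to $(\mu(E)/\mu(Q))^{\beta_w}$ with a constant independent of $w$. Since $\mu(B_Q)\le\mu(\wideB{Q})$ and $\mu(\wideB{Q})\le C_X\mu(Q)$ by \eqref{eq:cube-ball} and doubling, we get
\[
\frac{\mu(B_Q)^{1-\beta_w}}{\mu(\wideB{Q})}\le\mu(\wideB{Q})^{-\beta_w}\le\mu(Q)^{-\beta_w}.
\]
Here we only use that $\mu(\wideB{Q})\ge\mu(Q)$ and $\beta_w\in(0,1)$, so no $w$-dependent power of $C_X$ appears. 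This gives \eqref{eq:setdec} with constant $C_{\mathrm{RH},X}$. If $w(\wideB{Q})=0$, then $w(E)=0$ and there is nothing to prove.
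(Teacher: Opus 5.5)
Your proposal is correct and follows the paper's proof essentially step by step. It uses the same choice $\eps=1/(\tau_X[w]_{\weakA})$ with $\beta_w=\eps/(1+\eps)$, H\"older's inequality on $E\subset B_Q$, the reverse H\"older inequality \eqref{eq:rhi} on $B_Q$, and the lower bound \eqref{eq:lowA} for \eqref{eq:beta}; your normalization $\mu(B_Q)^{1-\beta_w}/\mu(\wideB{Q})\le\mu(\wideB{Q})^{-\beta_w}\le\mu(Q)^{-\beta_w}$ is only a rearrangement of the paper's use of $Q\subset B_Q\subset\wideB{Q}$, and it likewise gives the $w$-independent constant $C_{\mathrm{RH},X}$.
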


\begin{proof}
	Set
	\[
	\eps:=\frac{1}{\tau_X[w]_{\weakA}}.
	\]
	Then \eqref{eq:rhi} applies, and
	\[
	\frac{\eps}{1+\eps}
	=\frac{1}{\tau_X[w]_{\weakA}+1}
	=\beta_w.
	\]
	Let $Q\in\D$ and let $E\subset Q$ be measurable. Since
	$E\subset Q\subset B_Q$, H\"older's inequality gives
	\begin{align*}
		w(E)
		&=\int_E w\,\dd\mu\\
		&\le
		\left(\int_{B_Q}w^{1+\eps}\,\dd\mu\right)^{1/(1+\eps)}
		\mu(E)^{\eps/(1+\eps)}.
	\end{align*}
	Applying the weak reverse H\"older inequality \eqref{eq:rhi} to
	$B_Q$, whose enlargement is $\wideB{Q}=2\Lambda B_Q$, we obtain
	\begin{align*}
		w(E)
		&\le
		C_{\mathrm{RH},X}
		\frac{\mu(B_Q)^{1/(1+\eps)}}{\mu(\wideB{Q})}
		w(\wideB{Q})\,
		\mu(E)^{\eps/(1+\eps)}\\
		&=
		C_{\mathrm{RH},X}
		\frac{\mu(B_Q)}{\mu(\wideB{Q})}
		w(\wideB{Q})
		\left(\frac{\mu(E)}{\mu(B_Q)}\right)^{\beta_w}.
	\end{align*}
	Since $B_Q\subset\wideB{Q}$ and $Q\subset B_Q$, it follows that
	\[
	w(E)
	\le
	C_{\mathrm{RH},X}w(\wideB{Q})
	\left(\frac{\mu(E)}{\mu(Q)}\right)^{\beta_w},
	\]
	which proves \eqref{eq:setdec}.
	
	Finally, by \eqref{eq:lowA},
	$[w]_{\weakA}\ge c_{0,X}$, and hence
	\begin{align*}
		\beta_w
		&=\frac{1}{\tau_X[w]_{\weakA}+1}\\
		&=\frac{1}{[w]_{\weakA}}\,
		\frac{1}{\tau_X+[w]_{\weakA}^{-1}}\\
		&\ge
		\frac{1}{\tau_X+c_{0,X}^{-1}}\,
		\frac{1}{[w]_{\weakA}}.
	\end{align*}
	This proves \eqref{eq:beta} and completes the proof.
\end{proof}

For $Q\in\Sfam$, set
\[
 H_Q^{(m)}:=\bigcup_{P\in\operatorname{gen}_m(Q)}P.
\]
By \cref{lem:sparse-remainders,cor:weakA-set-decay}, every nonzero $w\in\weakA$ satisfies
\begin{equation}\label{eq:gdec}
 w(H_Q^{(m)})
 \lesssim_X 2^{-m\beta_w}w(\wideB{Q}).
\end{equation}
Since testing is normalized by $w(Q)$, we retain the ratio
$w(Q)/w(\wideB{Q})$.

\begin{definition}[Enlargement defect]
Let $w$ be a positive Borel measure that is finite on balls.  For $Q\in\D$, define
\begin{equation*}
 \mathfrak d_Q^w:=
 \begin{cases}
 \dfrac{w(Q)}{w(\wideB{Q})},&w(\wideB{Q})>0,\\[0.8em]
 0,&w(\wideB{Q})=0.
 \end{cases}
\end{equation*}
Then $\mathfrak d_Q^w\in[0,1]$.  Because $Q\subset\wideB{Q}$, a zero defect occurs exactly when $w(Q)=0$.
\end{definition}

For every $Q\in\Sfam$ with $\mathfrak d_Q^w>0$, \eqref{eq:gdec} becomes
\[
 \frac{w(H_Q^{(m)})}{w(Q)}
 \lesssim_X
 \frac{2^{-m\beta_w}}{\mathfrak d_Q^w}.
\]

We now fix a defect threshold $0<t \le 1$ and restrict attention to cubes satisfying $\mathfrak d_Q^w\ge t$.  On this subfamily, the preceding estimate yields uniform weighted decay after a controlled number of sparse generations.

\begin{lemma}\label{lem:N-step-weighted-decay}
Let $w\in\weakA$ be nonzero, let $0<t\le1$, and let
$\Ffam\subset\Sfam$ be a sparse subfamily satisfying
$\mathfrak d_Q^w\ge t$ for every $Q\in\Ffam$.  For $Q\in\Ffam$, let
\[
 H_{Q,\Ffam}^{(m)}
 :=\bigcup_{P\in\operatorname{gen}_m^{\Ffam}(Q)}P
\]
be the union of the $m$-th generation below $Q$ in the $\Ffam$-tree.
Fix a structural constant $C_X\ge1$ for which
\eqref{eq:setdec} holds, and set
\begin{equation}\label{eq:Nchoice}
 N=N(t,w)
 :=\left\lceil
 \frac{\log_2(2C_X/t)}{\beta_w}
 \right\rceil.
\end{equation}
Then
\begin{equation*}
 w\bigl(H_{Q,\Ffam}^{(N)}\bigr)
 \le\frac12w(Q),
 \qquad Q\in\Ffam.
\end{equation*}
\end{lemma}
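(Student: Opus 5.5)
The argument combines the sparse geometric decay of \cref{lem:sparse-remainders}, applied inside the $\Ffam$-tree, with the set-decay estimate \eqref{eq:setdec} of \cref{cor:weakA-set-decay} on $Q$, and then converts the resulting bound from $w(\wideB{Q})$ to $w(Q)$ using the defect assumption $\mathfrak d_Q^w\ge t$.

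\emph{Step 1: the subfamily is still $1/2$-sparse in the needed sense.} For $Q\in\Ffam$, each $\Ffam$-child $P$ of $Q$ lies in $\Sfam$, is strictly contained in $Q$, and is maximal among members of $\Ffam$ below $Q$. Hence $P$ is contained in some $\Sfam$-child of $Q$. The $\Ffam$-children are pairwise disjoint by maximality and laminarity, so
\[
\sum_{P\in\operatorname{ch}_{\Ffam}(Q)}\mu(P)\le\mu\Bigl(\bigcup_{P'\in\operatorname{ch}_{\Sfam}(Q)}P'\Bigr)\le\tfrac12\mu(Q).
\]
Thus $\Ffam$ is itself $1/2$-sparse, and the generation estimate of \cref{lem:sparse-remainders}, applied to $\Ffam$, gives
\[
\mu\bigl(H_{Q,\Ffam}^{(m)}\bigr)\le 2^{-m}\mu(Q)\qquad\text{for all } m\ge0.
\]
Alternatively, $H_{Q,\Ffam}^{(m)}\subseteq H_Q^{(m)}$, since each $\Ffam$-generation step descends at least one $\Sfam$-generation. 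Either route suffices.

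\emph{Step 2: apply the set-decay estimate.} Take $E=H_{Q,\Ffam}^{(N)}\subset Q$ in \eqref{eq:setdec}, with the constant $C_X$ from the statement:
\[
w\bigl(H_{Q,\Ffam}^{(N)}\bigr)\le C_X\,w(\wideB{Q})\,2^{-N\beta_w}.
\]

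\emph{Step 3: use the defect bound and the choice of $N$.} Since $\mathfrak d_Q^w\ge t>0$, we have $w(\wideB{Q})>0$ and $w(\wideB{Q})=w(Q)/\mathfrak d_Q^w\le w(Q)/t$. By \eqref{eq:Nchoice}, $N\beta_w\ge\log_2(2C_X/t)$, so $2^{-N\beta_w}\le t/(2C_X)$. Combining these,
\[
w\bigl(H_{Q,\Ffam}^{(N)}\bigr)\le C_X\cdot\frac{w(Q)}{t}\cdot\frac{t}{2C_X}=\tfrac12 w(Q).
\]

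\emph{Main obstacle.} The delicate point is Step 1: the generations must be taken in the $\Ffam$-tree, not the $\Sfam$-tree, so one has to check that measure decay survives passing to the subfamily. The disjointness and containment argument above handles this.
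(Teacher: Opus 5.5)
Your proof is correct and follows the paper's argument essentially step for step. Sparsity of the subfamily gives $\mu(H_{Q,\Ffam}^{(N)})\le 2^{-N}\mu(Q)$; then \eqref{eq:setdec}, the bound $w(\wideB{Q})\le t^{-1}w(Q)$ and the choice of $N$ finish the argument, while your Step~1 spells out the paper's one-line claim that a subfamily of a $1/2$-sparse family is $1/2$-sparse, via the same child-containment argument used in \cref{lem:finite-truncations}.
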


\begin{proof}
Since a subfamily of a $1/2$-sparse family is again $1/2$-sparse,
\cref{lem:sparse-remainders} gives
\[
 \mu\bigl(H_{Q,\Ffam}^{(N)}\bigr)
 \le2^{-N}\mu(Q).
\]
Using \eqref{eq:setdec} and $\mathfrak d_Q^w\ge t$,
\begin{align*}
 w\bigl(H_{Q,\Ffam}^{(N)}\bigr)
 &\le C_X2^{-N\beta_w}w(\wideB{Q})\\
 &\le C_Xt^{-1}2^{-N\beta_w}w(Q)
 \le\frac12w(Q),
\end{align*}
by the choice of $N$ in \eqref{eq:Nchoice}.
\end{proof}

To convert this delayed decay into a Carleson estimate, we split the sparse tree into depth classes modulo $N(t,w)$.  Within each class, successive cubes are separated by enough generations for the weighted remainders to carry a fixed proportion of their mass.  This yields the following defect-packing estimate.

\begin{proposition}\label{prop:weakA-defect-packing}
Let $\Sfam\subset\D$ be a finite or countable $1/2$-sparse family and let $w\in\weakA$ be nonzero.  For $0<t\le1$ and $R\in\D$, set
\[
 \Ffam_t(R):=
 \{Q\in\Sfam:Q\subseteq R,\ \mathfrak d_Q^w\ge t\}.
\]
Then
\begin{equation*}
 \sum_{Q\in\Ffam_t(R)}w(Q)
 \lesssim_X
 [w]_{\weakA}\bigl(1+\log_2(1/t)\bigr)w(R).
\end{equation*}
The estimate is uniform in the sparse family.
\end{proposition}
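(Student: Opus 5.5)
The plan is to work with the subfamily $\Ffam:=\Ffam_t(R)$, which is again $1/2$-sparse, and with the integer $N=N(t,w)$ from \eqref{eq:Nchoice}. By monotone truncation we may assume $\Ffam$ is finite. For each $Q\in\Ffam$ let $d(Q)\ge0$ be its depth in the $\Ffam$-tree, i.e.\ the number of strict $\Ffam$-ancestors of $Q$. We split $\Ffam$ into the $N$ depth classes
\[
\Ffam^{(j)}:=\{Q\in\Ffam:\ d(Q)\equiv j \bmod N\},\qquad j=0,\dots,N-1.
\]
In a given class, the children of $Q$ are the cubes of $\Ffam^{(j)}$ that are maximal strictly inside $Q$. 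These are exactly $\operatorname{gen}_N^{\Ffam}(Q)$, so their union is $H_{Q,\Ffam}^{(N)}$.

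Applying \cref{lem:N-step-weighted-decay} to this union gives, for every $Q\in\Ffam^{(j)}$,
\[
w(E_Q)\ge\tfrac12 w(Q),\qquad E_Q:=Q\setminus H_{Q,\Ffam}^{(N)}.
\]
The sets $E_Q$, $Q\in\Ffam^{(j)}$, are pairwise disjoint. The argument is the one in \cref{lem:sparse-remainders}: disjoint cubes have disjoint remainders, and if $P\subsetneq Q$ are both in the class, then $P$ lies inside some member of $\operatorname{gen}_N^{\Ffam}(Q)$ and is therefore removed from $E_Q$. Since every $Q$ lies in $R$, we get
\[
\sum_{Q\in\Ffam^{(j)}}w(Q)\le 2\sum_{Q\in\Ffam^{(j)}} w(E_Q)\le 2w(R).
\]
Summing over the $N$ classes yields $\sum_{Q\in\Ffam_t(R)}w(Q)\le 2N\,w(R)$.

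It then remains to bound $N$. By \eqref{eq:beta}, $\beta_w^{-1}\lesssim_X[w]_{\weakA}$, and
\[
\log_2(2C_X/t)\lesssim_X 1+\log_2(1/t).
\]
Hence
\[
N\le 1+\frac{\log_2(2C_X/t)}{\beta_w}\lesssim_X 1+[w]_{\weakA}\bigl(1+\log_2(1/t)\bigr).
\]
The lower bound \eqref{eq:lowA}, $[w]_{\weakA}\ge c_{0,X}$, absorbs the additive $1$ into $[w]_{\weakA}(1+\log_2(1/t))$. This gives the claimed estimate, with constants that do not depend on the sparse family.

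The main point to check is the identification of the children within a depth class with $\operatorname{gen}_N^{\Ffam}(Q)$, together with the resulting disjointness of the $E_Q$. Both rely on the laminar structure of $\Ffam$, so that depth is well defined and every strict descendant at depth $\ge d(Q)+N$ sits inside a generation-$N$ descendant. A secondary point is that the degenerate case $w(Q)=0$ is excluded by $t>0$, which keeps \cref{lem:N-step-weighted-decay} applicable without division issues.
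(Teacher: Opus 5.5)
Your proof is correct and follows the paper's argument essentially step for step: truncation to a finite family, splitting $\Ffam_t(R)$ into depth classes modulo $N(t,w)$, using \cref{lem:N-step-weighted-decay} to get $w(Q\setminus H_{Q,\Ffam}^{(N)})\ge\tfrac12 w(Q)$, disjointness of these remainders within a class, and the bound $N(t,w)\lesssim_X[w]_{\weakA}\bigl(1+\log_2(1/t)\bigr)$ via \eqref{eq:beta} and \eqref{eq:lowA}. Your identification of the children within a class with exactly $\operatorname{gen}_N^{\Ffam}(Q)$ is valid, and is a slightly sharper form of the paper's inclusion $Q\setminus G_Q\subseteq H_{Q,\Ffam_t(R)}^{(N)}$.
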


\begin{proof}

It is enough to consider finite $\Ffam_t(R)$; the countable case follows by
monotone convergence.  Regard this family as a forest.  Maximal cubes have
depth zero, and depth increases by one along each edge.  For
$j=0,\ldots,N-1$, define the residue class
\[
 \Ffam_t^{(j)}(R)
 :=\left\{
 Q\in\Ffam_t(R):
 \operatorname{depth}_{\Ffam_t(R)}(Q)\equiv j\pmod N
 \right\}.
\]
Thus
\[
 \Ffam_t(R)=\bigcup_{j=0}^{N-1}\Ffam_t^{(j)}(R).
\]

Fix $j$ and $Q\in\Ffam_t^{(j)}(R)$.  Let
\[
 G_Q:=Q\setminus
 \bigcup_{P\in\operatorname{ch}_{\Ffam_t^{(j)}(R)}(Q)}P.
\]
A proper descendant of $Q$ in the same residue class lies $N$ generations
below it; terminated branches contribute nothing.  Hence
\[
 Q\setminus G_Q
 \subseteq H_{Q,\Ffam_t(R)}^{(N)}.
\]
By \cref{lem:N-step-weighted-decay},
\[
 w(Q\setminus G_Q)\le\frac12w(Q),
 \qquad
 w(G_Q)\ge\frac12w(Q).
\]

For fixed $j$, the sets $G_Q$ are pairwise disjoint.  In the nested case
the smaller cube lies in a maximal same-class descendant of the larger one.
All these sets are contained in $R$, so
\[
 \sum_{Q\in\Ffam_t^{(j)}(R)}w(Q)
 \le2\sum_{Q\in\Ffam_t^{(j)}(R)}w(G_Q)
 \le2w(R).
\]
Summing over the $N$ residue classes gives
\begin{equation}\label{eq:packN}
 \sum_{Q\in\Ffam_t(R)}w(Q)
 \le2N(t,w)w(R).
\end{equation}

\eqref{eq:Nchoice} and
\eqref{eq:beta} give
\[
 N(t,w)
 \le1+\frac{\log_2(2C_X/t)}{\beta_w}
 \lesssim_X
 [w]_{\weakA}\bigl(1+\log_2(1/t)\bigr).
\]
Together with \eqref{eq:packN}, this proves the proposition.
\end{proof}

The level-set estimate above is the natural geometric form of the packing
result.  In the testing arguments, however, the enlargement defects arise
through positive powers rather than through sharp level restrictions.
Indeed, the sparse coefficients will contain factors of the form
$(\mathfrak d_Q^w)^\eta$, where the exponent $\eta>0$ is determined by
the parameters of the sparse form.  We therefore record the integrated
consequence of the preceding proposition that will be used in
Section~3.

For $\eta>0$, define
\begin{equation}\label{eq:defect-moment-profile}
	\mathfrak C_{w,\eta}(\Sfam)
	:=\sup_{\substack{R\in\D\\w(R)>0}}
	\frac1{w(R)}
	\sum_{\substack{Q\in\Sfam\\Q\subseteq R}}
	(\mathfrak d_Q^w)^\eta w(Q).
\end{equation}

\begin{corollary}\label{cor:defect-moment-packing}
	Let $\Sfam\subset\D$ be a finite or countable $1/2$-sparse family, let
	$w\in\weakA$ be nonzero, and let $\eta>0$.  Then
	\begin{equation}\label{eq:defect-moment-bound}
		\mathfrak C_{w,\eta}(\Sfam)
		\lesssim_X
		[w]_{\weakA}\left(1+\frac1\eta\right).
	\end{equation}
\end{corollary}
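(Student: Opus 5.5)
The plan is to deduce \eqref{eq:defect-moment-bound} from \cref{prop:weakA-defect-packing} by a layer-cake decomposition of the defect powers. Averaging the level-set estimate against the distribution of $\mathfrak d_Q^w$ should already give the linear dependence $1+1/\eta$, because the logarithmic loss $1+\log_2(1/t)$ has an $\eta$-weighted average of order $1/\eta$.

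Fix $R\in\D$ with $w(R)>0$. Cubes with $\mathfrak d_Q^w=0$ contribute nothing, so only cubes with $\mathfrak d_Q^w\in(0,1]$ matter. For such cubes I will use the identity
\[
 (\mathfrak d_Q^w)^\eta=\int_0^{1}\eta s^{\eta-1}\one_{\{\mathfrak d_Q^w\ge s\}}\dd s,
\]
which holds because $\mathfrak d_Q^w\le1$. Multiplying by $w(Q)$ and summing over $Q\in\Sfam$ with $Q\subseteq R$, Tonelli's theorem (all terms are nonnegative, and the family is countable) lets me interchange the sum and the integral:
\[
 \sum_{\substack{Q\in\Sfam\\Q\subseteq R}}(\mathfrak d_Q^w)^\eta w(Q)
 =\int_0^1\eta s^{\eta-1}\sum_{Q\in\Ffam_s(R)}w(Q)\dd s .
\]
\Cref{prop:weakA-defect-packing} bounds the inner sum by $C_X[w]_{\weakA}(1+\log_2(1/s))w(R)$, uniformly in $s\in(0,1]$ and in the sparse family.

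It then remains to evaluate two elementary integrals. The first is $\int_0^1\eta s^{\eta-1}\dd s=1$. For the second, the substitution $s=e^{-x}$ gives $\int_0^1 s^{\eta-1}\ln(1/s)\dd s=\int_0^\infty xe^{-\eta x}\dd x=\eta^{-2}$, and therefore
\[
 \int_0^1\eta s^{\eta-1}\log_2(1/s)\dd s=\frac{1}{\eta\ln 2}.
\]
Combining the two,
\[
 \sum_{\substack{Q\in\Sfam\\Q\subseteq R}}(\mathfrak d_Q^w)^\eta w(Q)\le C_X[w]_{\weakA}\Bigl(1+\frac1{\eta\ln2}\Bigr)w(R).
\]
Dividing by $w(R)$ and taking the supremum over $R$ yields \eqref{eq:defect-moment-bound}.

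There is no real obstacle in this argument. The only point needing care is the measurability and justification of the interchange: for fixed $Q$ the function $s\mapsto\one_{\{\mathfrak d_Q^w\ge s\}}$ is the indicator of an interval, so Tonelli applies directly. The essential feature is that the logarithm enters only linearly, so that after integration against $\eta s^{\eta-1}\dd s$ the loss is exactly $1/\eta$ rather than $1/\eta^2$.
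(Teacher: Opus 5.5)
Your proposal is correct and follows essentially the same route as the paper's proof: the layer-cake identity $d^\eta=\eta\int_0^1 t^{\eta-1}\one_{\{d\ge t\}}\,\dd t$, Tonelli, and \cref{prop:weakA-defect-packing} applied at each level $t$. Your explicit evaluation $\int_0^1\eta s^{\eta-1}\log_2(1/s)\,\dd s=1/(\eta\ln 2)$ simply spells out the final integral estimate, which the paper states only as a $\lesssim$ bound.
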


\begin{proof}
	For $0\le d\le1$,
	\[
	d^\eta
	=\eta\int_0^1t^{\eta-1}\one_{\{d\ge t\}}\,\dd t.
	\]
	Tonelli's theorem and \cref{prop:weakA-defect-packing} give
	\begin{align*}
		&\sum_{\substack{Q\in\Sfam\\Q\subseteq R}}
		(\mathfrak d_Q^w)^\eta w(Q)\\
		&\quad\lesssim_X
		[w]_{\weakA}w(R)
		\eta\int_0^1t^{\eta-1}
		\bigl(1+\log_2(1/t)\bigr)\,\dd t\\
		&\quad\lesssim_X
		[w]_{\weakA}\left(1+\frac1\eta\right)w(R).
	\end{align*}
	Taking the supremum over $R$ proves the result.
\end{proof}

This completes the geometric part of the argument. The defect-moment estimate replaces the unavailable pointwise comparison $w(\widehat B_Q)\lesssim w(Q)$ by an averaged Carleson control. In the next section, we insert this control into the Sawyer testing estimates.

\section{Proofs of the main estimates}

In the proofs below, we may initially assume that the sparse family is finite. The countable case follows by the truncation argument in \cref{lem:finite-truncations}.

\begin{lemma}[Finite truncation]\label{lem:finite-truncations}
	Let $\Sfam\subset\D$ be a finite or countable $\tau$-sparse family, and let
	$\Sfam_N\uparrow\Sfam$ be increasing finite subfamilies.  Then every
	$\Sfam_N$ is $\tau$-sparse and
	\begin{align*}
		\Achar_{\Sfam_N}^{\gamma}(u,v)
		&\uparrow\Achar_{\Sfam}^{\gamma}(u,v),\\
		\Lform_{\Sfam_N}^{a,b,\gamma}(f,g)
		&\uparrow\Lform_{\Sfam}^{a,b,\gamma}(f,g).
	\end{align*}
	Consequently, estimates uniform over finite subfamilies pass to $\Sfam$.
\end{lemma}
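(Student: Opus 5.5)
The plan is to check the three assertions separately, using only the definitions and monotone convergence.

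First, sparsity of each $\Sfam_N$. For $Q\in\Sfam_N$, every $P\in\operatorname{ch}_{\Sfam_N}(Q)$ is a strict member of $\Sfam$ contained in $Q$. Hence $P$ lies inside some maximal strict member of $\Sfam$ inside $Q$, that is, inside some $P'\in\operatorname{ch}_{\Sfam}(Q)$. This uses laminarity of $\D$: the chain of $\Sfam$-cubes between $P$ and $Q$ is finite, since cubes containing $P$ and contained in $Q$ range over finitely many generations. Therefore
\[
\bigcup_{P\in\operatorname{ch}_{\Sfam_N}(Q)}P\subseteq\bigcup_{P'\in\operatorname{ch}_{\Sfam}(Q)}P',
\]
and the remainder of $Q$ in $\Sfam_N$ contains its remainder in $\Sfam$. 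This gives measure at least $\tau\mu(Q)$.

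Second, the characteristic. $\Achar_{\Sfam_N}^{\gamma}(u,v)$ is a supremum over $Q\in\Sfam_N$ of a quantity that depends only on $Q$. Since $\Sfam_N$ increases, the sequence is nondecreasing. Since $\bigcup_N\Sfam_N=\Sfam$, every $Q\in\Sfam$ eventually appears, so the limit equals the supremum over $\Sfam$.

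Third, the form. Each term $\avg{f}_{a,Q}\avg{g}_{b',Q}\mu(Q)^{1+\gamma}$ is nonnegative and depends only on $Q$. The sum over $\Sfam_N$ is therefore a partial sum over an exhausting increasing sequence of finite sets, and it increases to the full sum by monotone convergence for counting measure.

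For the consequence, suppose $\Lform_{\Sfam_N}(f,g)\le C\,\Phi(\Sfam_N)\norm{f}{L^p(\omega)}\norm{g}{L^{q'}(\sigma)}$, where $\Phi$ is the right-hand side of the theorem and $C$ is independent of $N$. The weak-$A_\infty$ characteristics of $u$ and $v$ do not depend on the family, and $\Achar_{\Sfam_N}\le\Achar_{\Sfam}$, so $\Phi(\Sfam_N)\le\Phi(\Sfam)$. Letting $N\to\infty$ and using the third step gives the bound for $\Sfam$. Taking the supremum over $f,g$ then bounds $\Nform_{\Sfam}$.

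The only mildly delicate point is the child inclusion in the first step. It needs the maximal elements to exist, which the finiteness of generations between $P$ and $Q$ guarantees.
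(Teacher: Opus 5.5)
Your proposal is correct and is essentially the paper's argument: each $\Sfam_N$-child of $Q$ lies inside an $\Sfam$-child, which preserves the remainder bound, and the two monotone limits follow by exhaustion and monotone convergence. Your extra check that maximal strict members exist (only finitely many dyadic cubes lie between $P$ and $Q$) and your explicit passage of the uniform bound to $\Sfam$ fill in details the paper leaves implicit.
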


\begin{proof}
	Every child of $Q$ in $\Sfam_N$ is contained in a child of $Q$ in $\Sfam$,
	so the child-packing inequality is preserved.  The two monotonicity
	statements are immediate, and the conclusion follows from monotone
	convergence.
\end{proof}

For $R\in\D$, define
\begin{equation*}
 T_Rh
 :=\sum_{\substack{Q\in\Sfam\\Q\subseteq R}}
 \avg{u}_{Q}^{1/a-1}
 \avg{v}_{Q}^{-1/b}
 \mu(Q)^\gamma
 \avg{h}_{Q}\one_Q,
\end{equation*}
where the factor involving $v$ is interpreted as $1$ when $b=\infty$.
Define
\[
 \zeta
 :=\sup_{\substack{R\in\D\\u(R)>0}}
 \frac{\norm{T_Ru}{L^q(v)}}{u(R)^{1/p}},
\]
and, when $p>1$,
\[
 \zeta^*
 :=\sup_{\substack{R\in\D\\v(R)>0}}
 \frac{\norm{T_Rv}{L^{p'}(u)}}{v(R)^{1/q'}}.
\]

The sparse norm is controlled by these two tests.

\begin{lemma}\label{lem:bilinear-testing}
Assume
\[
 1\le p\le q<b\le\infty,
 \qquad q>1,
 \qquad 0<a<p,
 \qquad
 \frac1p-\frac1q\le\gamma\le\frac1a-\frac1b.
\]
Then
\begin{equation*}
 \Nform_{\Sfam}^{a,b,\gamma;p,q}(\omega,\sigma)
 \lesssim
 \zeta+\one_{\{p>1\}}\zeta^*.
\end{equation*}
The implicit constant depends only on the exponents.
\end{lemma}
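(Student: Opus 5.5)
The plan is to linearize the bumped averages by a change of variables, and then run a Sawyer-type testing argument for a positive dyadic form with weighted measures $u$ and $v$.

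\emph{Step 1: change of variables.} Write $f=\varphi\,u^{1/a}$ and $g=\psi\,v^{1/b'}$. From the definitions of $u$ and $v$ one checks that
\[
 u^{p/a}\omega=u,
 \qquad
 v^{q'/b'}\sigma=v,
\]
so that
\[
 \norm{f}{L^p(\omega)}=\norm{\varphi}{L^p(u)},
 \qquad
 \norm{g}{L^{q'}(\sigma)}=\norm{\psi}{L^{q'}(v)}.
\]
Writing $E^w_Q h:=w(Q)^{-1}\int_Q h\,w\,\dd\mu$, the averages become
\[
 \avg{f}_{a,Q}=\avg{u}_Q^{1/a}\bigl(E^u_Q\varphi^a\bigr)^{1/a},
 \qquad
 \avg{g}_{b',Q}=\avg{v}_Q^{1/b'}\bigl(E^v_Q\psi^{b'}\bigr)^{1/b'}.
\]
With the coefficient
\[
 \lambda_Q:=\avg{u}_Q^{1/a-1}\avg{v}_Q^{-1/b}\mu(Q)^{\gamma},
\]
the form therefore reads
\[
 \Lform_{\Sfam}^{a,b,\gamma}(f,g)
 =\sum_{Q\in\Sfam}\lambda_Q\,u(Q)\,\avg{v}_Q
 \bigl(E^u_Q\varphi^a\bigr)^{1/a}\bigl(E^v_Q\psi^{b'}\bigr)^{1/b'}.
\]
If both averages were linear ($a=b'=1$), this would be exactly $\int T(\varphi u)\,\psi v\,\dd\mu$ for the operator defining $T_R$. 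Its testing constants for $\varphi=\one_R$ and $\psi=\one_R$ are precisely $\zeta$ and $\zeta^*$.

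\emph{Step 2: parallel stopping construction.} Build principal cubes $\Ffam$ for $\varphi$ with respect to the $L^a(u)$-averages $(E^u_Q\varphi^a)^{1/a}$. Build principal cubes $\mathcal G$ for $\psi$ with respect to the $L^{b'}(v)$-averages. Since $a<p$ and $b'<q'$ (because $b>q$), the dyadic weighted maximal operators $M^u_a$ on $L^p(u)$ and $M^v_{b'}$ on $L^{q'}(v)$ are bounded with constants depending only on the exponents, uniformly in the weights, since $p/a>1$ and $q'/b'>1$. This yields the Carleson-type bounds
\[
 \sum_{F\in\Ffam}\Bigl(E^u_F\varphi^a\Bigr)^{p/a}u(F)\lesssim\norm{\varphi}{L^p(u)}^p,
\]
and the analogous bound for $\psi$. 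Split the sum over $Q$ according to whether $\pi_{\Ffam}(Q)\supseteq\pi_{\mathcal G}(Q)$ or the reverse.

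In the first part, replace the $\psi$-average by its stopping value $\lesssim E^v_G\psi^{b'}$. Keep $\varphi$ bounded by its stopping value on the principal cube $F$. Then the inner sum over $Q$ is dominated, for each $F$, by $(E^u_F\varphi^a)^{1/a}\int T_F u\cdot\Psi_F\, v$, where $\Psi_F$ collects the $\psi$ stopping data below $F$. H\"older in $L^q(v)\times L^{q'}(v)$, the definition of $\zeta$, and the inequality $\ell^p\subset\ell^q$ (since $p\le q$) to resum the $u(F)^{1/p}$ factors give a bound $\lesssim\zeta\norm{\varphi}{L^p(u)}\norm{\psi}{L^{q'}(v)}$. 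The second part is symmetric and produces $\zeta^*$, with the roles of $(u,p)$ and $(v,q')$ interchanged.

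\emph{Step 3: the case $p=1$.} Here $L^{p'}(u)=L^\infty$ and $\zeta^*$ is not defined, so Step 2 must be modified. I would avoid the second part entirely. For $p=1$ the $L^1(u)$ norm allows one to reduce, by a layer-cake or extreme-point argument adapted to the $a$-power, to $\varphi^a$ being concentrated on level sets of principal cubes. Then only the testing of $T$ against $u\one_R$ is needed, which is $\zeta$.

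I expect the main obstacle to be the resummation in Step 2 when $p<q$. There the stopping trees of $\varphi$ and $\psi$ interact, and one needs the off-diagonal embedding $\ell^p\hookrightarrow\ell^q$ together with the maximal bounds. This is where the standard Lacey--Sawyer--Uriarte-Tuero / Hyt\"onen parallel-corona argument has to be transcribed to the bumped averages. The point to verify carefully is that the bumps enter only through the $L^{p/a}(u)$ and $L^{q'/b'}(v)$ maximal bounds, so that no weight characteristic appears in the constant.
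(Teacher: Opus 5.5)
Your Steps~1 and~2 are the standard parallel-stopping Sawyer argument, which the paper does not carry out and only invokes by citation. For $p>1$ they go through, with two small corrections. First, the stopping value for $\psi$ should read $(E^v_G\psi^{b'})^{1/b'}$. Second, you need a concrete $\Psi_F$, for instance $\Psi_F(x):=\sup\{(E^v_G\psi^{b'})^{1/b'}: x\in G\in\mathcal G,\ \pi_{\mathcal F}(G)=F\}$. Distinct $F$ then produce distinct $\mathcal G$-cubes containing $x$, and stopping values at least double down a $\mathcal G$-chain. Hence $\sum_F\Psi_F^{q'}\lesssim(M^v_{b'}\psi)^{q'}$, which is the orthogonality you need before H\"older.

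The genuine gap is Step~3. For $p=1$ you have $0<a<1$, and then $\varphi\mapsto(E^u_Q\varphi^a)^{1/a}$ is superadditive on nonnegative functions (reverse Minkowski). Decomposing $\varphi^a$ instead does not help, since the outer power $1/a>1$ is again superadditive. So estimates for level-set pieces cannot be summed back into an estimate for $\varphi$. An extreme-point reduction is also unavailable, because the form is concave, not convex, in $\varphi$. As written, Step~3 does not prove the case $p=1$.

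The case $p=1$ in fact needs no reduction and no stopping on $\psi$. Use only the principal cubes $\mathcal F$ of $\varphi$. For $\pi_{\mathcal F}(Q)=F$, bound $(E^u_Q\varphi^a)^{1/a}\le2(E^u_F\varphi^a)^{1/a}$, and on $Q$ bound $(E^v_Q\psi^{b'})^{1/b'}\le M^v_{b'}\psi$. The $F$-block is then at most $2(E^u_F\varphi^a)^{1/a}\int T_Fu\,M^v_{b'}\psi\,v\dd\mu\lesssim\zeta\,(E^u_F\varphi^a)^{1/a}u(F)\norm{\psi}{L^{q'}(v)}$, using $q'>b'$.

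Because the testing normalization $u(F)^{1/p}=u(F)$ is now linear, an $\ell^1$ sum over $F$ is affordable. The stopping rule gives $u\bigl(F\setminus\bigcup_{F'\in\operatorname{ch}_{\mathcal F}(F)}F'\bigr)\ge(1-2^{-a})u(F)$. Therefore $\sum_F(E^u_F\varphi^a)^{1/a}u(F)\lesssim\int M^u_a\varphi\,u\dd\mu\lesssim\norm{\varphi}{L^1(u)}$, since $M^u$ is bounded on $L^{1/a}(u)$ for $1/a>1$. This is what the paper means by saying that for $p=1$ only the forward test is needed.
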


\begin{proof}
This is the usual Sawyer testing argument for positive dyadic operators;
see \citep[Lemmas~3.5--3.6]{BZ} and also \citep{FH,HHL,LSUT}.  It uses only
the laminar order, stopping cubes, and weighted dyadic maximal estimates, so
it applies to our fixed dyadic system.  For $p=1$ only the forward test is
needed.
\end{proof}

We also use the following finite-tree estimate.

\begin{lemma}\label{lem:laminar-operator}
Let $\mathcal G$ be a finite laminar family, let $\nu$ be a locally finite
positive measure, let $1<t<\infty$, and let $c_Q\ge0$.  Then
\begin{equation*}
 \left\|\sum_{Q\in\mathcal G}c_Q\one_Q\right\|_{L^t(\nu)}^t
 \lesssim_t
 \sum_{\substack{Q\in\mathcal G\\\nu(Q)>0}}
 c_Q\nu(Q)
 \left(
  \frac1{\nu(Q)}
  \sum_{\substack{P\in\mathcal G\\P\subseteq Q}}c_P\nu(P)
 \right)^{t-1}.
\end{equation*}
\end{lemma}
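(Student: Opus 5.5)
The plan is to reduce the $L^t(\nu)$ norm to a sum over the tree using an elementary inequality along chains, then to integrate cube by cube. Write $F:=\sum_{Q\in\mathcal G}c_Q\one_Q$ and, for $Q\in\mathcal G$, set $F_Q:=\sum_{P\in\mathcal G,\,P\subseteq Q}c_P\one_P$ and
\[
A_Q:=\frac1{\nu(Q)}\int_Q F_Q\,\dd\nu=\frac1{\nu(Q)}\sum_{\substack{P\in\mathcal G\\P\subseteq Q}}c_P\nu(P)
\qquad(\nu(Q)>0).
\]
The right-hand side of the lemma is then $\sum_Q c_Q\nu(Q)A_Q^{t-1}$. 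Cubes with $\nu(Q)=0$ do not affect $\int F^t\,\dd\nu$, so we discard them at the outset.

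\emph{Step 1 (pointwise chain inequality).} Fix $x$. Because $\mathcal G$ is finite and laminar, the cubes containing $x$ form a chain $Q_1\supsetneq Q_2\supsetneq\dots\supsetneq Q_n$. For nonnegative $a_i$, the elementary bound $(\sum_i a_i)^t\le t\sum_i a_i(\sum_{j\ge i}a_j)^{t-1}$ holds; it follows by telescoping $s_i^t-s_{i+1}^t\le t(s_i-s_{i+1})s_i^{t-1}$ with $s_i=\sum_{j\ge i}a_j$. At the point $x$ the tail sum $\sum_{j\ge i}c_{Q_j}$ equals $F_{Q_i}(x)$. This gives
\[
F^t\le t\sum_{Q\in\mathcal G}c_Q\one_QF_Q^{t-1},
\qquad\text{hence}\qquad
\int F^t\,\dd\nu\le t\sum_Q c_Q\int_QF_Q^{t-1}\,\dd\nu .
\]

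\emph{Step 2 (case $1<t\le2$).} Here $0<t-1\le1$, so Hölder (Jensen) on $Q$ gives
\[
\int_QF_Q^{t-1}\,\dd\nu\le\nu(Q)^{2-t}\Bigl(\int_QF_Q\,\dd\nu\Bigr)^{t-1}=\nu(Q)A_Q^{t-1}.
\]
Together with Step 1 this proves the lemma in this range, with constant $t$.

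\emph{Step 3 (case $t>2$; the main obstacle).} Jensen now goes the wrong way, so I would compare $\int_QF_Q^{t-1}\,\dd\nu$ with averages over subcubes instead of over $Q$ alone. The approach I would try first is a stopping argument by principal cubes. Among the cubes of $\mathcal G$ inside $Q$, select maximal cubes $P$ with $A_P>2A_Q$, and iterate. On the part of $Q$ not covered by stopping cubes, the tail sums are controlled by $A_Q$ through a weak-type/layer-cake estimate. I would then distribute $\int F^t\,\dd\nu$ over the stopping tree, where each cube contributes $c_Q\nu(Q)$ times the $(t-1)$-st power of the average of its stopping parent. Since that average is at most $2$ times $A_Q$ for every $Q$ in the parent's block, we get $A_{\mathrm{parent}}\le 2A_Q$, and summing gives $\lesssim_t\sum_Q c_Q\nu(Q)A_Q^{t-1}$.

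As a fallback I would use duality: $\int F^t=\sum_Q c_Q\int_QF^{t-1}$, split $F=F_Q+(\text{contribution of ancestors of }Q)$, and bound the ancestor part by the weighted dyadic maximal function $M^{\mathcal G}_\nu$. That operator is bounded on $L^s(\nu)$ for every $s>1$ with no doubling assumption on $\nu$ (Doob's inequality), which makes it suitable for a general locally finite $\nu$. This step is where I expect the real work. Steps 1 and 2 are routine, and the whole argument uses only the laminar structure, so it applies verbatim to subfamilies of $\D$.
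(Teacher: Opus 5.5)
Your Steps 1 and 2 are correct and give a complete proof for $1<t\le 2$ with constant $t$. The paper offers no argument of its own here; it only cites the discrete Wolff-potential estimate of Cascante--Ortega--Verbitsky. The gap is Step 3, which is exactly the nontrivial range $t>2$ that the paper needs (it applies the lemma with $t=q$ and $t=p'$). Neither of your sketches closes it.

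In the stopping argument the decisive comparison is backwards. Suppose $Q$ lies in the block of a stopping cube $S$ chosen by the rule $A_P>2A_S$. Then non-selection of $Q$ gives $A_Q\le 2A_S$, not $A_S\le 2A_Q$. No lower bound of $A_Q$ by $A_S$ is available. For example, a cube $Q\subset S$ with no descendants in $\mathcal G$ has $A_Q=c_Q$, which can be made arbitrarily small without $Q$ being selected and without changing $A_S$. So the contributions $c_Q\nu(Q)A_{\pi(Q)}^{t-1}$ produced by a stopping tree cannot be dominated termwise by $c_Q\nu(Q)A_Q^{t-1}$. Dominating them in sum is essentially the lemma itself.

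The duality fallback is circular as stated. The $F_Q$ part brings you back to $\sum_Q c_Q\int_Q F_Q^{t-1}\dd\nu$. The ancestor part is the constant $G_Q=\sum_{R\supsetneq Q}c_R$, which is not an average of anything, and $M^{\mathcal G}_\nu$ does not control it in terms of the right-hand side.

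The missing idea is induction on the exponent with absorption, which fits directly onto your Step 1. Assume the lemma for the exponent $t-1>1$ with constant $C_{t-1}$; your Step 2 is the base range. Apply it to $F_Q$, i.e.\ to the finite laminar family $\{P\in\mathcal G:P\subseteq Q\}$, whose averages $A_P$ agree with those computed in $\mathcal G$. Write $\Gamma_P:=\sum_{Q\in\mathcal G,\,Q\supseteq P}c_Q$ and interchange the sums. Step 1 then gives
\[
\int F^t\dd\nu\le tC_{t-1}\sum_{Q}c_Q\sum_{P\subseteq Q}c_P\nu(P)A_P^{t-2}=tC_{t-1}\sum_{P}c_P\nu(P)A_P^{t-2}\Gamma_P .
\]
Since $\Gamma_P\le F$ on $P$, you have $\sum_P c_P\nu(P)\Gamma_P^{t-1}\le\sum_P c_P\int_P F^{t-1}\dd\nu=\int F^t\dd\nu$. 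H\"older in the $P$-sum with exponents $\frac{t-1}{t-2}$ and $t-1$ then yields
\[
\int F^t\dd\nu\le tC_{t-1}\Bigl(\sum_{P}c_P\nu(P)A_P^{t-1}\Bigr)^{\frac{t-2}{t-1}}\Bigl(\int F^t\dd\nu\Bigr)^{\frac{1}{t-1}}.
\]
For a finite family of cubes of finite measure, $\int F^t\dd\nu<\infty$. You may therefore divide and obtain the lemma with $C_t=(tC_{t-1})^{(t-1)/(t-2)}$. This uses only laminarity, as you intended.
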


\begin{proof}
This is the upper estimate in the discrete tree-potential identity; see
\citep[Proposition~2.2]{COV} or \citep[Lemma~4.1]{FH}.  The proof only uses
laminarity and local finiteness.
\end{proof}

For $Q\in\Sfam$, put
\begin{equation*}
	\lambda_Q
	:=\mu(Q)^{-\kappa}u(Q)^{1/a}v(Q)^{1/b'}
	=K_Q u(Q)^{1/p}v(Q)^{1/q'},
\end{equation*}
where
\[
K_Q:=\mu(Q)^{-\kappa}u(Q)^{\theta_u}v(Q)^{\theta_v}.
\]
By the definition of enlargement defects,
\begin{equation*}
	K_Q
	\simeq_X
	\mu(\wideB{Q})^{-\kappa}
	u(\wideB{Q})^{\theta_u}v(\wideB{Q})^{\theta_v}
	(\mathfrak d_Q^u)^{\theta_u}
	(\mathfrak d_Q^v)^{\theta_v}.
\end{equation*}
In particular, if
$A:=\Achar_{\Sfam}^{\gamma}(u,v)$, then
\begin{equation}\label{eq:defect-factor-upper}
	K_Q
	\lesssim_X
	A(\mathfrak d_Q^u)^{\theta_u}
	(\mathfrak d_Q^v)^{\theta_v}
	\le A.
\end{equation}

To simplify notation, let $U:=[u]_{\weakA}$ and, whenever $v\in\weakA$, let $V:=[v]_{\weakA}$.

\subsection{Case~1}

We first record an elementary sparse summation estimate.

\begin{lemma}\label{lem:sparse-monomial}
	Let $\Ffam$ be a $1/2$-sparse family, let $R\in\D$, and let
	$A_0>0$ and $B,C\ge0$ satisfy $A_0+B+C\ge1$.  Then
	\[
	\sum_{\substack{Q\in\Ffam\\Q\subseteq R}}
	\mu(Q)^{A_0}u(Q)^Bv(Q)^C
	\lesssim_{A_0,B,C}
	\mu(R)^{A_0}u(R)^Bv(R)^C.
	\]
\end{lemma}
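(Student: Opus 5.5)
We argue by sparse generations below $R$.  Let $\mathcal F_R:=\{Q\in\Ffam: Q\subseteq R\}$ and split it into generations $\mathcal G_0,\mathcal G_1,\dots$, where $\mathcal G_0$ consists of the maximal cubes and $\mathcal G_{m+1}$ of the $\Ffam$-children of cubes in $\mathcal G_m$.  The cubes within one generation are pairwise disjoint.  The plan is to bound each generation sum by a geometrically decaying multiple of $\mu(R)^{A_0}u(R)^Bv(R)^C$ and then sum over $m$.

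\textbf{Step 1: generalized H\"older over disjoint cubes.}  Put $s:=A_0+B+C\ge1$ and normalize the exponents by $\alpha:=A_0/s$, $\beta:=B/s$, $\delta:=C/s$, so that $\alpha+\beta+\delta=1$.  For a disjoint family $\mathcal G$ of cubes inside $R$, H\"older's inequality with exponents $1/\alpha,1/\beta,1/\delta$ (dropping any factor with zero exponent) gives
\[
\sum_{Q\in\mathcal G}\mu(Q)^{\alpha}u(Q)^{\beta}v(Q)^{\delta}
\le\Bigl(\sum\mu(Q)\Bigr)^{\alpha}\Bigl(\sum u(Q)\Bigr)^{\beta}\Bigl(\sum v(Q)\Bigr)^{\delta}.
\]
Since $s\ge1$, the $\ell^1$ norm dominates the $\ell^s$ norm, so
\[
\sum_{Q\in\mathcal G} x_Q^{s}\le\Bigl(\sum_{Q\in\mathcal G} x_Q\Bigr)^{s}
\qquad\text{with}\qquad x_Q:=\mu(Q)^{\alpha}u(Q)^{\beta}v(Q)^{\delta}.
\]
Combining the two displays yields
\[
\sum_{Q\in\mathcal G}\mu(Q)^{A_0}u(Q)^Bv(Q)^C\le\mu(\textstyle\bigcup\mathcal G)^{A_0}\,u(R)^B\,v(R)^C.
\]

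\textbf{Step 2: geometric decay and summation.}  By \cref{lem:sparse-remainders}, $\mu(\bigcup\mathcal G_m)\le 2^{-m}\mu(R)$; the case $m=0$ is just $\bigcup\mathcal G_0\subseteq R$.  Step 1 therefore bounds the $m$-th generation by $2^{-mA_0}\mu(R)^{A_0}u(R)^Bv(R)^C$.  Summing the geometric series over $m\ge0$ gives the constant $(1-2^{-A_0})^{-1}$, which depends only on $A_0$.  For infinite families one runs the argument on finite truncations and passes to the limit by monotone convergence.

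\textbf{Where the hypotheses enter.}  The argument is short, and the only point needing care is in Step 1: the hypothesis $A_0+B+C\ge1$ is exactly what allows moving from the H\"older-normalized sum to the original powers.  Likewise $A_0>0$ is essential, since it is the only exponent that supplies decay.  No weak-$A_\infty$ input is used: $u$ and $v$ enter only through their total masses on $R$, so no enlargement defect appears.
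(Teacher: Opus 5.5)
Your proof is correct, but the core estimate goes by a different route than the paper's.

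\textbf{The paper's route.} It first treats the normalized case $A_0+B+C=1$ with a maximal-function argument. Using the disjoint major subsets $F_Q$, it bounds the sum by $\int_R M_R(u\one_R)^B M_R(v\one_R)^C\,\dd\mu$. It then applies H\"older's inequality with total exponent $s=B+C<1$ and the localized Kolmogorov inequality; the case $s=0$ is handled separately. Only at the end does it pass to $A_0+B+C>1$, via $\sum_Q x_Q^D\le(\sum_Q x_Q)^D$.

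\textbf{Your route.} You slice $\mathcal F_R$ into sparse generations and apply the discrete H\"older inequality to each disjoint generation. Within each generation you use the same $\ell^1\subset\ell^s$ embedding. The geometric factor $2^{-mA_0}$ then comes from the measure decay $\mu(\bigcup\mathcal G_m)\le 2^{-m}\mu(R)$. This follows from \cref{lem:sparse-remainders} applied to each maximal cube of $\mathcal F_R$ and summed; that is implicit in your Step~2 when $R\notin\Ffam$.

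\textbf{Comparison.} Your version is more elementary: it uses no maximal operators or Kolmogorov estimate. It needs no separate case $B=C=0$, and it gives the explicit constant $(1-2^{-A_0})^{-1}$, independent of $B$ and $C$. The paper's version instead packages the whole sum into a single integral over $R$ using only the disjoint major subsets. That is the standard Carleson-embedding mechanism, and it does not require organizing the family into generations.
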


\begin{proof}
	First suppose $A_0+B+C=1$, and put $s:=B+C<1$.  If $s=0$, the assertion
	follows from the disjoint sparse remainders.  Otherwise, the localized
	dyadic maximal operator and the sparse major subsets give
	\[
	\sum_{Q\subseteq R}\mu(Q)^{A_0}u(Q)^Bv(Q)^C
	\lesssim
	\int_RM_R(u\one_R)^B M_R(v\one_R)^C\,\dd\mu.
	\]
	H\"older's inequality with total exponent $s$, followed by the localized
	Kolmogorov estimate, bounds the right-hand side by
	$\mu(R)^{1-s}u(R)^Bv(R)^C$.  The general case follows by normalizing the
	three exponents by $A_0+B+C$ and using
	$\sum_Qx_Q^D\le(\sum_Qx_Q)^D$ for $D\ge1$.
\end{proof}

\begin{proposition}\label{prop:case1-moment-tests}
	Assume Case~1.  There exist positive numbers $\eta_u$ and, when $p>1$,
	$\eta_v$, depending only on the exponents, such that
	\begin{equation}\label{eq:case1-forward-moment}
		\zeta
		\lesssim
		A\,\mathfrak C_{u,\eta_u}(\Sfam)^{1/q},
	\end{equation}
	and, when $p>1$,
	\begin{equation}\label{eq:case1-dual-moment}
		\zeta^*
		\lesssim
		A\,\mathfrak C_{v,\eta_v}(\Sfam)^{1/p'}.
	\end{equation}
\end{proposition}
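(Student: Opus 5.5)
The plan is to prove \eqref{eq:case1-forward-moment} by a direct computation of the forward test with \cref{lem:laminar-operator}, and to obtain \eqref{eq:case1-dual-moment} by the symmetric argument with the roles of $(u,p,a)$ and $(v,q',b')$ interchanged. Fix $R\in\D$ with $u(R)>0$ and write $T_Ru=\sum_{Q\subseteq R}c_Q\one_Q$ with $c_Q:=\avg{u}_Q^{1/a}\avg{v}_Q^{-1/b}\mu(Q)^\gamma$. A direct computation gives $c_Qv(Q)=\mu(Q)^{-\kappa}u(Q)^{1/a}v(Q)^{1/b'}=\lambda_Q$. Applying \cref{lem:laminar-operator} with $\nu=v$ and $t=q>1$ therefore yields
\[
 \norm{T_Ru}{L^q(v)}^q
 \lesssim
 \sum_{\substack{Q\in\Sfam\\Q\subseteq R}}\lambda_Q
 \left(\frac1{v(Q)}\sum_{\substack{P\in\Sfam\\P\subseteq Q}}\lambda_P\right)^{q-1}.
\]

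The central step is a local Carleson bound for the inner sum:
\begin{equation*}
 \sum_{\substack{P\in\Sfam\\P\subseteq Q}}\lambda_P\lesssim A\,u(Q)^{1/p}v(Q)^{1/q'}.
\end{equation*}
Granting this bound, the outer summand is at most $A^{q-1}K_Q\,u(Q)^{q/p}$, because the powers of $v(Q)$ cancel: $1/q'-(q-1)/q'=0$. Since $q/p\ge1$ and $Q\subseteq R$, we have $u(Q)^{q/p}\le u(Q)\,u(R)^{q/p-1}$. By \eqref{eq:defect-factor-upper} and $\mathfrak d^v_Q\le1$, we also have $K_Q\lesssim A(\mathfrak d^u_Q)^{\theta_u}$. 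Summing over $Q\subseteq R$ then gives
\[
 \norm{T_Ru}{L^q(v)}^q\lesssim A^q\,\mathfrak C_{u,\theta_u}(\Sfam)\,u(R)^{q/p},
\]
which is \eqref{eq:case1-forward-moment} with $\eta_u:=\theta_u>0$; the positivity holds because $a<p$. The dual estimate proceeds in the same way, with $\nu=u$ and $t=p'$, using the factor $(\mathfrak d^v_Q)^{\theta_v}$ and $\eta_v:=\theta_v$. This needs $q<b$, and it also needs the exponent $p'/q'\ge1$ produced by the cancellation, which follows from $p\le q$.

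I expect the inner Carleson bound to be the main obstacle. It is the only place where the Case~1 hypotheses enter, and it must not use the weak-$A_\infty$ property, since the final constant carries only $\mathfrak C^{1/q}$. My first attempt will be to write
\[
 \lambda_P=\mu(P)^{1+\gamma}\avg{u}_P^{1/a}\avg{v}_P^{1/b'},
\]
and to split $\lambda_P=K_P^{1-s}\cdot K_P^{s}u(P)^{1/p}v(P)^{1/q'}$ for some $s\in(0,1)$. The factor $K_P^{1-s}$ is bounded by $A^{1-s}$. The aim is to choose $s$ so that the remaining factor becomes a monomial $\mu(P)^{A_0}u(P)^Bv(P)^C$ with $A_0>0$ and $A_0+B+C\ge1$, to which \cref{lem:sparse-monomial} applies.

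Here the slack $\kappa>0$ (from $\gamma<1/a-1/b$) together with either $\sgap>0$ (from $p<q$) or $\gamma=0$ must be used. In the subcase $\gamma=0$, the total homogeneity is exactly $1$, and I would route the bound through the localized maximal-function form $\int_Q M_Q(u\one_Q)^{1/a}M_Q(v\one_Q)^{1/b'}$ used in the proof of \cref{lem:sparse-monomial}. Whatever is left over must then be absorbed by the outer ball characteristic, via $K_Q\lesssim A$, after passing to $Q$.

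If the naive splitting leaves a negative power of $\mu$, the fallback is to compare each $K_P$ with $K_Q$ along the sparse tree. One would then sum geometrically in the sparse generations, using \cref{lem:sparse-remainders} and the strict inequality $\kappa>0$ to produce decay $2^{-m\kappa}$ that compensates for the loss.
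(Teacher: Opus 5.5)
\textbf{There is a genuine gap, at exactly the step you flag.} The inner bound $\sum_{P\in\Sfam,\,P\subseteq Q}\lambda_P\lesssim A\,u(Q)^{1/p}v(Q)^{1/q'}$ carries all of the Case~1 content, and none of your proposed routes proves it.
\begin{itemize}
\item With $s\in(0,1)$ the leftover factor is $K_P^s u(P)^{1/p}v(P)^{1/q'}=\mu(P)^{-s\kappa}u(P)^{1/p+s\theta_u}v(P)^{1/q'+s\theta_v}$. Its $\mu$-exponent is negative, so \cref{lem:sparse-monomial} does not apply. Such a sum admits no bound in terms of $\mu(Q),u(Q),v(Q)$ at all: a single tiny cube carrying all the mass makes it blow up.
\item The maximal-function route for $\gamma=0$ also fails. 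The integral $\int_Q M(u\one_Q)^{1/a}M(v\one_Q)^{1/b'}\dd\mu$ has total exponent $1/a+1/b'=1+\kappa>1$, and it has discarded the cube-wise information $K_P\lesssim A$. If you extract $K_P\lesssim A$ first, you are left with total exponent $1/p+1/p'=1$, which is the endpoint where Kolmogorov's inequality is unavailable.
\item In the fallback the sign is reversed. Comparing $K_P$ with $K_Q$ produces $(\mu(Q)/\mu(P))^{\kappa}$, which grows like $2^{m\kappa}$ down the tree rather than decaying.
\end{itemize}
A positive power of $\mu(P)$ appears only if $K_P$ is raised to a power $d>1$:
\[
\lambda_P=K_P^{d}\,\mu(P)^{\kappa(d-1)}u(P)^{1/a-d\theta_u}v(P)^{1/b'-d\theta_v},
\]
with $d$ close to $1$ as Case~1 permits ($\kappa>0$, and $\sgap>0$ or $\gamma=\sgap=0$). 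But then \cref{lem:sparse-monomial} yields only $\sum_{P\subseteq Q}\lambda_P\lesssim A^dK_Q^{1-d}u(Q)^{1/p}v(Q)^{1/q'}$. This is weaker than your claim whenever $K_Q\ll A$.

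\textbf{How to close it.} The paper does not insist on your inner bound. It keeps the factor $K_Q^{1-d}$ and absorbs it in the outer sum of \cref{lem:laminar-operator}, where the summand becomes $A^{d(q-1)}K_Q^{q-d(q-1)}u(Q)^{q/p}$. The exponent is positive because $d<q'$, so $\eta_u=(q-d(q-1))\theta_u$. Dually, choosing $d<p$ gives $\eta_v=(p'-d(p'-1))\theta_v$.

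Alternatively, your stronger inner bound is actually true, but it needs a stopping time on $K$:
\begin{enumerate}
\item Let the stopping children of $G$ be the maximal $P\in\Sfam$ with $P\subsetneq G$ and $K_P>2K_G$.
\item Every $P$ whose minimal stopping ancestor is $G$ has $K_P\le 2K_G$. The $d>1$ estimate localized at $G$ then bounds this block sum by $\lesssim K_G\,u(G)^{1/p}v(G)^{1/q'}$.
\item $K$ more than doubles between nested stopping cubes. Hence the stopping cubes with $K_G\in(2^{-i-1}CA,2^{-i}CA]$ are pairwise disjoint, where $C$ is the constant in $K_P\lesssim_X A$.
\item H\"older, using $1/p+1/q'\ge1$, bounds the total over each such band by $2^{-i}CA\,u(Q)^{1/p}v(Q)^{1/q'}$. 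Summing over $i$ gives your claim.
\end{enumerate}
With that lemma inserted, the rest of your argument is correct: the cancellation of the $v(Q)$-powers, $u(Q)^{q/p}\le u(Q)\,u(R)^{q/p-1}$, and the dual computation using $p'/q'\ge1$. It also gives the cleaner moments $\eta_u=\theta_u$ and $\eta_v=\theta_v$.
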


\begin{proof}
	Assume first that $\Sfam$ is finite.  For the forward estimate, choose
	$d>1$ sufficiently close to one that
	\begin{equation}\label{eq:case1-d-forward}
		\frac1a-d\theta_u\ge0,
		\qquad
		\frac1{b'}-d\theta_v\ge0,
		\qquad
		d<q',
		\qquad
		(d-1)(\gamma-\sgap)\le\sgap.
	\end{equation}
	Such a $d$ exists: if $p<q$, then $\sgap>0$, while if $p=q$, Case~1 forces
	$\gamma=\sgap=0$.
	
	For $Q\in\Sfam$, set
	\[
	I_Q:=\sum_{\substack{P\in\Sfam\\P\subseteq Q}}\lambda_P.
	\]
	Since $K_P\lesssim_X A$, one has
	\begin{align*}
		\lambda_P
		&\lesssim_X
		A^d\mu(P)^{\kappa(d-1)}
		u(P)^{1/a-d\theta_u}
		v(P)^{1/b'-d\theta_v}.
	\end{align*}
	The three exponents are nonnegative, $\kappa(d-1)>0$, and
	\begin{align*}
		&\kappa(d-1)
		+\left(\frac1a-d\theta_u\right)
		+\left(\frac1{b'}-d\theta_v\right)\\
		&\qquad=1+\sgap-(d-1)(\gamma-\sgap)\ge1.
	\end{align*}
	Thus \cref{lem:sparse-monomial} gives
	\begin{equation}\label{eq:case1-descendant}
		I_Q
		\lesssim
		A^d\mu(Q)^{\kappa(d-1)}
		u(Q)^{1/a-d\theta_u}
		v(Q)^{1/b'-d\theta_v}.
	\end{equation}
	
	Let
	\[
	c_Q:=\avg{u}_Q^{1/a}\avg{v}_Q^{-1/b}\mu(Q)^\gamma,
	\qquad
	c_Qv(Q)=\lambda_Q.
	\]
	Apply \cref{lem:laminar-operator} with $\nu=v$ and $t=q$.  Put
	\[
	\lambda_q:=q-d(q-1)>0.
	\]
	Inserting \eqref{eq:case1-descendant} and collecting the powers of
	$\mu(Q),u(Q),v(Q)$ gives
	\[
	\norm{T_Ru}{L^q(v)}^q
	\lesssim
	A^{d(q-1)}
	\sum_{Q\subseteq R}K_Q^{\lambda_q}u(Q)^{q/p}.
	\]
	By \eqref{eq:defect-factor-upper} and
	$d(q-1)+\lambda_q=q$,
	\begin{align*}
		\norm{T_Ru}{L^q(v)}^q
		&\lesssim
		A^q
		\sum_{Q\subseteq R}
		(\mathfrak d_Q^u)^{\lambda_q\theta_u}
		(\mathfrak d_Q^v)^{\lambda_q\theta_v}
		u(Q)^{q/p}\\
		&\le
		A^qu(R)^{q/p-1}
		\sum_{Q\subseteq R}
		(\mathfrak d_Q^u)^{\lambda_q\theta_u}u(Q).
	\end{align*}
	Therefore \eqref{eq:case1-forward-moment} holds with
	$\eta_u:=\lambda_q\theta_u>0$.
	
	When $p>1$, choose $d>1$ satisfying the first, second, and fourth
	conditions in \eqref{eq:case1-d-forward}, together with $d<p$.  Apply the
	laminar estimate with $\nu=u$ and $t=p'$.  If
	\[
	\lambda_{p'}:=p'-d(p'-1)>0,
	\]
	the same exponent calculation yields
	\[
	\norm{T_Rv}{L^{p'}(u)}^{p'}
	\lesssim
	A^{p'}v(R)^{p'/q'-1}
	\sum_{Q\subseteq R}
	(\mathfrak d_Q^v)^{\lambda_{p'}\theta_v}v(Q).
	\]
	This proves \eqref{eq:case1-dual-moment} with
	$\eta_v:=\lambda_{p'}\theta_v>0$.
\end{proof}

\begin{proof}[Proof of \cref{thm:weakA-bilinear-intro}, Case~1]
	For finite $\Sfam$, combine
	\cref{prop:case1-moment-tests,cor:defect-moment-packing} to obtain
	\[
	\zeta\lesssim A U^{1/q},
	\qquad
	\zeta^*\lesssim A V^{1/p'}\quad(p>1).
	\]
	Applying \cref{lem:bilinear-testing} proves the estimate for finite
	$\Sfam$.  The countable case follows from
	\cref{lem:finite-truncations}.
\end{proof}

\subsection{Case~2}

\cref{lem:sparse-monomial} requires $\kappa>0$ and the possibility
of choosing $d>1$ in \eqref{eq:case1-d-forward}.  These conditions fail at the upper endpoint and on the positive diagonal.  In this regime we instead keep the defect powers of both weights and apply H\"older's
inequality directly to the descendant sum.

\begin{lemma}
	\label{lem:case2-descendant}
	Assume Case~2 and let $u,v\in\weakA$.  For every
	$s\in[p,q]\cap(1,\infty)$ and every $Q\in\D$,
	\begin{equation}\label{eq:case2-descendant}
		\sum_{\substack{P\in\Sfam\\P\subseteq Q}}\lambda_P
		\lesssim
		A\,U^{1/s}V^{1/s'}
		u(Q)^{1/p}v(Q)^{1/q'}.
	\end{equation}
	The implicit constant is uniform for
	$s\in[p,q]\cap(1,\infty)$.
\end{lemma}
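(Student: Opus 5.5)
The plan is to bound each $\lambda_P$ using the defect factorization \eqref{eq:defect-factor-upper}, split the weight powers so that H\"older's inequality with exponents $s,s'$ applies, and then close with the defect-moment packing of \cref{cor:defect-moment-packing} for $u$ and $v$ separately. The point is that in Case~2 we cannot trade a power of $\mu(P)$ for summability as in \cref{lem:sparse-monomial}. Instead, the strictly positive exponents $\theta_u=1/a-1/p>0$ (since $a<p$) and $\theta_v=1/q-1/b>0$ (since $q<b$) of the defects provide the summability.

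First, for $P\in\Sfam$, \eqref{eq:defect-factor-upper} gives
\[
\lambda_P\lesssim_X A\,(\mathfrak d_P^u)^{\theta_u}(\mathfrak d_P^v)^{\theta_v}u(P)^{1/p}v(P)^{1/q'}.
\]
Fix $s\in[p,q]\cap(1,\infty)$ and write
\[
u(P)^{1/p}v(P)^{1/q'}=u(P)^{1/s}v(P)^{1/s'}\cdot u(P)^{1/p-1/s}\,v(P)^{1/s-1/q}.
\]
Here $1/q'-1/s'=1/s-1/q$. Both extra exponents are nonnegative because $p\le s\le q$. Since $P\subseteq Q$, we can replace $u(P)^{1/p-1/s}$ by $u(Q)^{1/p-1/s}$ and $v(P)^{1/s-1/q}$ by $v(Q)^{1/s-1/q}$ and pull them out of the sum. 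If $u(Q)=0$ or $v(Q)=0$, every term has $u(P)=0$ or $v(P)=0$ and the estimate is trivial, so we may assume both are positive.

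Next, apply H\"older's inequality with exponents $s$ and $s'$ to the remaining sum:
\[
\sum_{P\subseteq Q}(\mathfrak d_P^u)^{\theta_u}(\mathfrak d_P^v)^{\theta_v}u(P)^{1/s}v(P)^{1/s'}
\le\Bigl(\sum_{P\subseteq Q}(\mathfrak d_P^u)^{s\theta_u}u(P)\Bigr)^{1/s}
\Bigl(\sum_{P\subseteq Q}(\mathfrak d_P^v)^{s'\theta_v}v(P)\Bigr)^{1/s'}.
\]
By the definition \eqref{eq:defect-moment-profile} and \cref{cor:defect-moment-packing} applied with $\eta=s\theta_u$ and $\eta=s'\theta_v$, the two factors are at most
\[
\bigl(U(1+1/(s\theta_u))u(Q)\bigr)^{1/s}
\quad\text{and}\quad
\bigl(V(1+1/(s'\theta_v))v(Q)\bigr)^{1/s'},
\]
up to structural constants. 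Recombining the powers gives $u(Q)^{1/s+1/p-1/s}=u(Q)^{1/p}$ and $v(Q)^{1/s'+1/s-1/q}=v(Q)^{1/q'}$, which is \eqref{eq:case2-descendant}. If $Q\notin\D$-normalized sums include only $P\in\Sfam$, nothing changes, since the corollary's supremum is over all $R\in\D$.

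The only delicate point is uniformity in $s$. Since $s\ge1$ and $s'\ge1$, we have $1/(s\theta_u)\le1/\theta_u$ and $1/(s'\theta_v)\le1/\theta_v$. Hence the factors $(1+1/\eta)^{1/s}$ and $(1+1/\eta)^{1/s'}$ are bounded by constants depending only on $a,p,q,b$. The structural constants raised to the powers $1/s$ and $1/s'$ are likewise uniformly bounded. Beyond $u,v\in\weakA$, no feature specific to Case~2 is needed: the argument uses only $\theta_u,\theta_v>0$ and $p\le s\le q$.
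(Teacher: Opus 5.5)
Your proof is correct and follows the paper's argument: the defect factorization \eqref{eq:defect-factor-upper}, H\"older with exponents $s,s'$, the monotonicity $u(P)\le u(Q)$ and $v(P)\le v(Q)$ (using $p\le s\le q$) to remove the excess powers, and \cref{cor:defect-moment-packing} with $\eta=s\theta_u$ and $\eta=s'\theta_v$, where uniformity in $s$ follows from $s,s'\ge1$. The only difference is the order of steps: you pull out $u(Q)^{1/p-1/s}v(Q)^{1/s-1/q}$ before applying H\"older, whereas the paper applies H\"older first and then uses $u(P)^{s/p}\le u(Q)^{s/p-1}u(P)$ and its $v$-analogue.
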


\begin{proof}
	By \eqref{eq:defect-factor-upper} and H\"older's inequality,
	\begin{align*}
		\sum_{P\subseteq Q}\lambda_P
		&\lesssim_X A
		\left(
		\sum_{P\subseteq Q}
		(\mathfrak d_P^u)^{s\theta_u}u(P)^{s/p}
		\right)^{1/s}\\
		&\qquad\times
		\left(
		\sum_{P\subseteq Q}
		(\mathfrak d_P^v)^{s'\theta_v}v(P)^{s'/q'}
		\right)^{1/s'}.
	\end{align*}
	Since $p\le s\le q$, both $s/p$ and $s'/q'$ are at least one.  Hence
	\begin{align*}
		\sum_{P\subseteq Q}
		(\mathfrak d_P^u)^{s\theta_u}u(P)^{s/p}
		&\le
		u(Q)^{s/p-1}
		\sum_{P\subseteq Q}
		(\mathfrak d_P^u)^{s\theta_u}u(P),\\
		\sum_{P\subseteq Q}
		(\mathfrak d_P^v)^{s'\theta_v}v(P)^{s'/q'}
		&\le
		v(Q)^{s'/q'-1}
		\sum_{P\subseteq Q}
		(\mathfrak d_P^v)^{s'\theta_v}v(P).
	\end{align*}
	Apply \cref{cor:defect-moment-packing} to the two sums.  The factors
	$1+(s\theta_u)^{-1}$ and $1+(s'\theta_v)^{-1}$ are uniformly bounded in the
	admissible interval, which proves \eqref{eq:case2-descendant}.
\end{proof}

\begin{proposition}\label{prop:case2-tests}
	Under the assumptions of \cref{lem:case2-descendant}, for every admissible
	$s$,
	\begin{align}
		\zeta
		&\lesssim
		A\,U^{1/q}
		\bigl(U^{1/s}V^{1/s'}\bigr)^{1/q'},
		\label{eq:case2-forward-s}\\
		\zeta^*
		&\lesssim
		A\,V^{1/p'}
		\bigl(U^{1/s}V^{1/s'}\bigr)^{1/p},
		\qquad p>1.
		\label{eq:case2-dual-s}
	\end{align}
\end{proposition}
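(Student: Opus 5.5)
We bound the forward test with \cref{lem:laminar-operator} applied with $\nu=v$ and $t=q$ (note $q>1$), and treat the dual test symmetrically with $\nu=u$ and $t=p'$ when $p>1$. Fix $R\in\D$ with $u(R)>0$. As in Case~1, set $c_Q:=\avg{u}_Q^{1/a}\avg{v}_Q^{-1/b}\mu(Q)^\gamma$, so that $T_Ru=\sum_{Q\subseteq R}c_Q\one_Q$ and $c_Qv(Q)=\lambda_Q$. The laminar estimate then gives
\[
\norm{T_Ru}{L^q(v)}^q
\lesssim\sum_{\substack{Q\subseteq R\\ v(Q)>0}}\lambda_Q\left(\frac{1}{v(Q)}\sum_{P\subseteq Q}\lambda_P\right)^{q-1}.
\]
We insert \cref{lem:case2-descendant} into the inner sum. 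This bounds the inner average by $A\,U^{1/s}V^{1/s'}u(Q)^{1/p}v(Q)^{1/q'-1}$. Raising to the power $q-1$ and using $(1/q'-1)(q-1)=-1/q'\cdot q'/q\cdot(q-1)\cdot\ldots$, which simplifies to $v(Q)^{-1}\cdot$ exponent $-(q-1)/q$, we get
\[
\norm{T_Ru}{L^q(v)}^q\lesssim (A\,U^{1/s}V^{1/s'})^{q-1}\sum_{Q\subseteq R}\lambda_Q\,u(Q)^{(q-1)/p}v(Q)^{-1/q}.
\]
Since $\lambda_Q=K_Qu(Q)^{1/p}v(Q)^{1/q'}$, the remaining summand equals $K_Q\,v(Q)^{1/q'-1/q}\cdots$. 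We should collect the powers carefully. The $v$-powers are $1/q'-(q-1)/q=0$, and the $u$-power is $1/p+(q-1)/p=q/p$. So the summand is $K_Q\,u(Q)^{q/p}$.

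Next we apply \eqref{eq:defect-factor-upper}, but keep only the $u$-defect, using $(\mathfrak d_Q^v)^{\theta_v}\le1$. This gives $K_Q\lesssim A(\mathfrak d_Q^u)^{\theta_u}$. Because $q/p\ge1$, we have $u(Q)^{q/p}\le u(R)^{q/p-1}u(Q)$, so the sum is at most $A\,u(R)^{q/p-1}\sum_{Q\subseteq R}(\mathfrak d_Q^u)^{\theta_u}u(Q)$. \cref{cor:defect-moment-packing} then bounds this by $A\,u(R)^{q/p}\,U(1+\theta_u^{-1})$. Taking $q$-th roots yields $\zeta\lesssim A^{1/q'}(U^{1/s}V^{1/s'})^{1/q'}\cdot A^{1/q}U^{1/q}$, which is \eqref{eq:case2-forward-s}.

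The step I expect to need care is the requirement $\theta_u>0$, since the corollary degenerates as $\eta\to0$. Here $\theta_u=1/a-1/p>0$ because $a<p$, so it is fine; the constant depends only on the exponents. On the dual side we need $\theta_v=1/q-1/b>0$, which holds because $q<b$. The same computation with $u$ and $v$ interchanged goes through: the inner sum is bounded by \cref{lem:case2-descendant}, the $u$-powers cancel, the $v$-power becomes $p'/q'\ge1$ (since $p\le q$), and the packing is applied to $(\mathfrak d_Q^v)^{\theta_v}v(Q)$. This produces $\zeta^*\lesssim A\,V^{1/p'}(U^{1/s}V^{1/s'})^{1/p}$, where the exponent $1/p$ arises as $(p'-1)/p'$.

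The main obstacle is checking that the exponent bookkeeping closes exactly, with the $v$-power cancelling in the forward test and the $u$-power cancelling in the dual test. This is why the descendant bound must be stated with the precise normalization $u(Q)^{1/p}v(Q)^{1/q'}$, as it is in \cref{lem:case2-descendant}. One also handles cubes with $u(Q)=0$ or $v(Q)=0$, which contribute zero, and extends from finite to countable families by \cref{lem:finite-truncations}.
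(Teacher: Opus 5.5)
Your proof is correct and follows the paper's argument: apply \cref{lem:laminar-operator} with $\nu=v$, $t=q$ (resp.\ $\nu=u$, $t=p'$), insert the descendant bound of \cref{lem:case2-descendant} so that the $v$-powers (resp.\ $u$-powers) cancel exactly, discard the other weight's defect factor since it is at most $1$, and finish with \cref{cor:defect-moment-packing} at $\eta=\theta_u$ (resp.\ $\eta=\theta_v$). The only slip is in your intermediate display, where $v(Q)^{-1/q}$ should read $v(Q)^{-1/q'}$ because $(1/q'-1)(q-1)=-1/q'$; your subsequent bookkeeping $1/q'-(q-1)/q=0$ already uses the correct exponent.
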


\begin{proof}
	Apply \cref{lem:laminar-operator} to $T_Ru$ with $\nu=v$ and $t=q$.
	Using \eqref{eq:case2-descendant} gives
	\begin{align*}
		\norm{T_Ru}{L^q(v)}^q
		&\lesssim
		A^q\bigl(U^{1/s}V^{1/s'}\bigr)^{q-1}\\
		&\qquad\times
		\sum_{Q\subseteq R}
		(\mathfrak d_Q^u)^{\theta_u}
		(\mathfrak d_Q^v)^{\theta_v}u(Q)^{q/p}.
	\end{align*}
	Since $q/p\ge1$ and $\mathfrak d_Q^v\le1$,
	\[
	\sum_{Q\subseteq R}
	(\mathfrak d_Q^u)^{\theta_u}
	(\mathfrak d_Q^v)^{\theta_v}u(Q)^{q/p}
	\le
	u(R)^{q/p-1}
	\sum_{Q\subseteq R}(\mathfrak d_Q^u)^{\theta_u}u(Q).
	\]
	The $u$-moment estimate proves \eqref{eq:case2-forward-s}.  The dual
	estimate follows in the same way from the laminar potential estimate with
	$\nu=u$ and $t=p'$, using the $v$-moment in the outer sum.
\end{proof}

\begin{lemma}\label{lem:mixed-optimization}
	For $U,V>0$,
	\[
	\inf_{s\in[p,q]\cap(1,\infty)}U^{1/s}V^{1/s'}
	=\mathfrak m_{p,q}(U,V).
	\]
	When $p=1$, the value at $s=1$ is interpreted as the limit $s\downarrow1$.
\end{lemma}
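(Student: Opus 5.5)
The function $\phi(s):=U^{1/s}V^{1/s'}=V\,(U/V)^{1/s}$ is monotone in $s$ on $(1,\infty)$, so the infimum over the interval $[p,q]\cap(1,\infty)$ is attained, or approached, at one of its endpoints. The plan is first to establish this monotonicity and then to identify the endpoint values with the two entries of $\mathfrak m_{p,q}(U,V)$.

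\emph{Reformulation.} Since $1/s'=1-1/s$, we write
\[
U^{1/s}V^{1/s'}=V\exp\Bigl(\tfrac1s\log\tfrac UV\Bigr).
\]
As a function of $r:=1/s\in(0,1)$ this is $V e^{r\log(U/V)}$. It is monotone in $r$, hence monotone in $s$, and continuous. More precisely, it is nondecreasing in $r$ if $U\ge V$ and nonincreasing in $r$ if $U\le V$.

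\emph{Endpoints.} The infimum over an interval of $r$-values is the smaller of the values, or limits, at the two endpoints. When $p>1$ the $s$-interval is $[p,q]$, and the two endpoint values are $U^{1/p}V^{1/p'}$ and $U^{1/q}V^{1/q'}$; their minimum is exactly $\mathfrak m_{p,q}(U,V)$. When $p=1$ the interval is $(1,q]$. By continuity, the infimum equals the minimum of the limit as $s\downarrow1$, which is $U^{1}V^{0}=U$, and the value $U^{1/q}V^{1/q'}$ at $s=q$. This matches the stated convention $U^{1/1}V^{1/\infty}=U$ for $\mathfrak m_{1,q}$. The degenerate case $p=q$ is immediate, since then both entries coincide.

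\emph{Main obstacle.} There is essentially none. The only care needed is the $p=1$ endpoint: it is not attained, so the statement must be read with the infimum and the limit convention. We also need to note that $q>1$ guarantees that the interval is nonempty.
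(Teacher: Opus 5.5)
Your proposal is correct and matches the paper's argument: you write $U^{1/s}V^{1/s'}=V(U/V)^{1/s}$, note that it is monotone in $1/s$, and take the smaller endpoint value, using the limit $U$ as $s\downarrow1$ when $p=1$. The paper makes the same observation by noting that the logarithm of the expression is affine in $1/s$.
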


\begin{proof}
	Since
	\[
	\log\bigl(U^{1/s}V^{1/s'}\bigr)
	=\log V+\frac1s\log\frac UV,
	\]
	the expression is affine in $1/s$.  Its minimum over the admissible
	interval is therefore attained at an endpoint, with the limiting
	interpretation when $p=1$.
\end{proof}

\begin{proof}[Proof of \cref{thm:weakA-bilinear-intro}, Case~2]
	For finite $\Sfam$, optimize \eqref{eq:case2-forward-s} and, when
	$p>1$, \eqref{eq:case2-dual-s} by \cref{lem:mixed-optimization}, and
	then apply \cref{lem:bilinear-testing}.  The countable case follows from
	\cref{lem:finite-truncations}.
\end{proof}

\section{Applications}

We apply the preceding estimates to
\[
\mathcal A_{\Sfam}^{r,\xi}F(x)
:=\left[
\sum_{Q\in\Sfam}
\left(\mu(Q)^{\xi-1}\int_Q |F|\,\dd\mu\right)^r
\one_Q(x)
\right]^{1/r},
\qquad 0<r<\infty.
\]
This agrees with the operator $\mathcal A_{\Sfam}^{r,\alpha}$ of \citet{FH}
after the reparameterization $\alpha=1-\xi$.
Put $\delta_{p,q}=1/p-1/q$.
Their Euclidean result covers
\[
1<p\le q<\infty,
\qquad
\delta_{p,q}\le\xi<1,
\]
not only the Sobolev line $\xi=\delta_{p,q}$.
For a Euclidean sparse family, let
\[
[w,\sigma]_{\mathsf A_{p,q}^{\xi}(\Sfam)}
:=\sup_{Q\in\Sfam}
|Q|^{\xi-1}w(Q)^{1/q}\sigma(Q)^{1/p'}.
\]
Write $\mathsf E$ for the exceptional range $p=q>r$ and $\xi>0$, and set $\vartheta=r/p$.
If $w,\sigma\in A_\infty(\mathbb R^n)$, then \citet{FH} prove that boundedness
is equivalent to the finiteness of this characteristic and that
\[
\norm{\mathcal A_{\Sfam}^{r,\xi}(\sigma\cdot)}
{L^p(\sigma)\to L^q(w)}
\lesssim
[w,\sigma]_{\mathsf A_{p,q}^{\xi}(\Sfam)}
\Phi_{A_\infty}(w,\sigma),
\]
where
\[
\Phi_{A_\infty}(w,\sigma)
:=
\begin{dcases}
	[\sigma]_{A_\infty}^{1/q}
	+[w]_{A_\infty}^{(1/r-1/p)_+},
	& \neg\mathsf E,\\[2mm]
	\begin{aligned}
		&[w]_{A_\infty}^{(1-\vartheta)^2/r}
		[\sigma]_{A_\infty}^{(1-(1-\vartheta)^2)/r}\\[-1mm]
		&\quad+
		[w]_{A_\infty}^{(1-\vartheta^2)/r}
		[\sigma]_{A_\infty}^{\vartheta^2/r},
	\end{aligned}
	& \mathsf E.
\end{dcases}
\]
Thus the larger range requires no new pointwise model; only the diagonal
positive-fractional case produces a different mixed factor.

On a space of homogeneous type, define
\[
\Achar_{p,q}^{\xi}(w,\sigma;\Sfam)
:=\sup_{Q\in\Sfam}
\mu(\wideB{Q})^{\xi-1}
w(\wideB{Q})^{1/q}
\sigma(\wideB{Q})^{1/p'}.
\]
It is dominated by the global ball characteristic
\[
\Achar_{p,q}^{\xi}(w,\sigma)
:=\sup_B
\mu(B)^{\xi-1}w(B)^{1/q}\sigma(B)^{1/p'}.
\]

\begin{corollary}[Weak-$A_\infty$ estimate beyond the Sobolev line]
	\label{cor:vector-weakA-general-xi}
	Let $1<p\le q<\infty$, $0<r<\infty$, and $\delta_{p,q}\le\xi<1$.
	Let $\Sfam\subset\D$ be sparse, assume $\sigma\in\weakA$, and assume also $w\in\weakA$ when $r<p$.
	Then
	\[
	\norm{\mathcal A_{\Sfam}^{r,\xi}(\sigma\cdot)}
	{L^p(\sigma)\to L^q(w)}
	\lesssim
	\Achar_{p,q}^{\xi}(w,\sigma;\Sfam)
	\Phi_{\weakA}(w,\sigma),
	\]
	where
	\[
	\Phi_{\weakA}(w,\sigma)
	:=
	\begin{dcases}
		[\sigma]_{\weakA}^{1/q}
		+\one_{\{r<p\}}[w]_{\weakA}^{1/r-1/p},
		& \neg\mathsf E,\\[2mm]
		\begin{aligned}
			&[w]_{\weakA}^{(1-\vartheta)^2/r}
			[\sigma]_{\weakA}^{(1-(1-\vartheta)^2)/r}\\[-1mm]
			&\quad+
			[w]_{\weakA}^{(1-\vartheta^2)/r}
			[\sigma]_{\weakA}^{\vartheta^2/r},
		\end{aligned}
		& \mathsf E.
	\end{dcases}
	\]
	The local characteristic may be replaced by $\Achar_{p,q}^{\xi}(w,\sigma)$.
	The implicit constant depends only on $p,q,r,\xi$ and the structural constants of $X$.
\end{corollary}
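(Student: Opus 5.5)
The plan is to derive \cref{cor:vector-weakA-general-xi} from \cref{thm:weakA-bilinear-intro} by raising to the power $r$ and using duality. For the main range $r<p$, or more generally $r\le p$ with $r<q$, put $F=\sigma f$ and write
\[
\norm{\mathcal A_{\Sfam}^{r,\xi}F}{L^q(w)}^r
=\Bigl\|\sum_{Q\in\Sfam}\mu(Q)^{r\xi}\avg{F}_{Q}^r\one_Q\Bigr\|_{L^{q/r}(w)} .
\]
The key observation is that $\avg{F}_Q^r=\avg{|F|^r}_{1/r,Q}$. Set $\tilde f:=|\sigma f|^r$, $\tilde p:=p/r$, $\tilde q:=q/r>1$, $a:=1/r$, $b:=\infty$ (so $b'=1$) and $\gamma:=r\xi$, and pair with $g\in L^{\tilde q'}(w)$. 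This gives
\[
\int \bigl(\mathcal A^{r,\xi}_{\Sfam}F\bigr)^r gw\,\dd\mu=\Lform_{\Sfam}^{1/r,\infty,r\xi}(\tilde f,gw).
\]
Also $\norm{f}{L^p(\sigma)}^r=\norm{\tilde f}{L^{\tilde p}(\sigma^{1-p})}$ and $\norm{g}{L^{\tilde q'}(w)}=\norm{gw}{L^{\tilde q'}(w^{1-\tilde q'})}$. So we take $\omega=\sigma^{1-p}$ and $\tilde\sigma=w^{1-\tilde q'}$, and a short computation gives the transformed weights $u=\sigma$ and $v=w$.

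Next I check the hypotheses of the theorem. We have $1\le\tilde p\le\tilde q<\infty=b$, $\tilde q>1$, and $a=1/r<\tilde p$ because $p>1$. The condition $r\delta_{p,q}\le r\xi\le r=1/a-1/b$ holds. The exponents are $\kappa=r(1-\xi)$, $\theta_u=r/p'$ and $\theta_v=r/q$, so $\Achar^{r\xi}_{\Sfam}(\sigma,w)=\Achar^{\xi}_{p,q}(w,\sigma;\Sfam)^r$. Since $\xi<1$, Case~2 occurs exactly when $p=q$ and $\xi>0$, which is the exceptional range $\mathsf E$ (recall $r<p$ there). In Case~1, with $U=[\sigma]_{\weakA}$ and $V=[w]_{\weakA}$, the theorem gives the bound $A^r\bigl(U^{r/q}+\one_{\{r<p\}}V^{1-r/p}\bigr)$, and taking $r$-th roots yields $[\sigma]^{1/q}+\one_{\{r<p\}}[w]^{1/r-1/p}$. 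In Case~2 we have $\tilde p=\tilde q$, so $\mathfrak m=U^{\vartheta}V^{1-\vartheta}$. Then $U^{\vartheta}\mathfrak m^{1-\vartheta}=U^{1-(1-\vartheta)^2}V^{(1-\vartheta)^2}$ and $V^{1-\vartheta}\mathfrak m^{\vartheta}=U^{\vartheta^2}V^{1-\vartheta^2}$, and after taking $r$-th roots these reproduce $\Phi_{\weakA}$ on $\mathsf E$.

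The main obstacle is the range $r\ge p$, where $w$ is not assumed to be in $\weakA$. There $\tilde p\le1$, and $\tilde q\le 1$ is possible when $r\ge q$, so the theorem does not apply. Since $\ell^r$ norms decrease in $r$, I would first dominate pointwise $\mathcal A^{r,\xi}_{\Sfam}F\le\mathcal A^{p,\xi}_{\Sfam}F$. If $p<q$, this case is covered by the previous paragraph with $r=p$: then $\tilde p=1$ and $\tilde q>1$, Case~1 applies, and only $u=\sigma\in\weakA$ is needed. This gives $[\sigma]^{1/q}$.

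If $p=q\le r$, I would argue directly. Here $\norm{\mathcal A^{p,\xi}_{\Sfam}(\sigma f)}{L^p(w)}^p=\sum_Q\bigl(\mu(Q)^{\xi-1}\sigma(Q)\bigr)^p w(Q)\avg{f}_Q^{\sigma,p}$ is a Carleson embedding in $L^p(\sigma)$. By the dyadic $\sigma$-Carleson embedding theorem it suffices to bound $\sum_{Q\subseteq R}(\mu(Q)^{\xi-1}\sigma(Q))^pw(Q)$ by a constant times $\sigma(R)$. Comparing $Q$ with $\wideB{Q}$ and inserting defects gives
\[
(\mu(Q)^{\xi-1}\sigma(Q))^pw(Q)\lesssim_X A^p(\mathfrak d_Q^\sigma)^{p-1}\sigma(Q).
\]
Then \cref{cor:defect-moment-packing} with $\eta=p-1$ bounds the sum by $A^p[\sigma]_{\weakA}\sigma(R)$, which yields the factor $A[\sigma]^{1/p}$ as required. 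Finally, passage from finite to countable families is by \cref{lem:finite-truncations}, and the replacement of the local characteristic by the global one follows from the pointwise comparison between the two.
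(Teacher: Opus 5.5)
Your proposal is correct and follows the paper's proof essentially step for step. For $r\le p$, $r<q$ you use the same duality reduction to $\Lform_{\Sfam}^{1/r,\infty,r\xi}$ with $u=\sigma$, $v=w$: Case~1 gives the $\neg\mathsf E$ branch, including the endpoint $r=p<q$, and Case~2 occurs exactly on $\mathsf E$. You then reduce $r>p$ to $r=p$ by pointwise $\ell^r$-monotonicity, and at $r=p=q$ you use the defect factorization with $(\mathfrak d_Q^\sigma)^{p-1}$, \cref{cor:defect-moment-packing} with $\eta=p-1$, and the $\sigma$-weighted Carleson embedding, all as in the paper. Your exponent bookkeeping also checks out: $\theta_u=r/p'$, $\theta_v=r/q$, $\kappa=r(1-\xi)$, and $\mathfrak m=U^{\vartheta}V^{1-\vartheta}$ producing the two mixed products on $\mathsf E$.
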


\begin{proof}
	It suffices first to take $\Sfam$ finite.
	For $0<r\le p$, apart from the corner $r=p=q$, put
	\[
	P=\frac{p}{r},
	\qquad
	Q_0=\frac{q}{r},
	\qquad
	\gamma=r\xi.
	\]
	For nonnegative $f,h$,
	\[
	\Lform_{\Sfam}^{1/r,\infty,r\xi}
	\bigl((f\sigma)^r,hw\bigr)
	=
	\int_X
	\bigl[\mathcal A_{\Sfam}^{r,\xi}(f\sigma)\bigr]^r
	h\,\dd w,
	\]
	so duality gives
	\[
	\norm{\mathcal A_{\Sfam}^{r,\xi}(\sigma\cdot)}
	{L^p(\sigma)\to L^q(w)}^r
	=
	\Nform_{\Sfam}^{1/r,\infty,r\xi;P,Q_0}
	\bigl(\sigma^{1-p},w^{1-Q_0'}\bigr).
	\]
	The transformed weights and parameters are
	\[
	u=\sigma,
	\qquad
	v=w,
	\qquad
	\theta_u=\frac{r}{p'},
	\qquad
	\theta_v=\frac{r}{q},
	\qquad
	\kappa=r(1-\xi),
	\]
	and
	\[
	\Achar_{\Sfam}^{r\xi}(u,v)
	=\Achar_{p,q}^{\xi}(w,\sigma;\Sfam)^r.
	\]
	Since $r(1/p-1/q)\le r\xi<r$, \cref{thm:weakA-bilinear-intro} applies.
	Case~1 yields the first branch of $\Phi_{\weakA}$, including the endpoint
	$r=p<q$; Case~2 occurs exactly in $\mathsf E$ and gives the second branch
	after taking the $r$th root.
	
	If $r=p=q$, defect factorization and \cref{cor:defect-moment-packing} imply
	\[
	\norm{\mathcal A_{\Sfam}^{p,\xi}(f\sigma)}{L^p(w)}^p
	\lesssim
	\Achar_{p,p}^{\xi}(w,\sigma;\Sfam)^p
	\sum_{Q\in\Sfam}
	(\mathfrak d_Q^\sigma)^{p-1}
	\avg{|f|}_{Q,\sigma}^p
	\sigma(Q).
	\]
	The weighted Carleson embedding theorem bounds the sum by $[\sigma]_{\weakA}\norm{f}{L^p(\sigma)}^p$.
	If $r>p$, pointwise monotonicity gives
	$\mathcal A_{\Sfam}^{r,\xi}F\le\mathcal A_{\Sfam}^{p,\xi}F$.
	Approximation by finite subfamilies completes the proof.
\end{proof}

The same estimate holds for every operator admitting pointwise sparse domination
by finitely many models $\mathcal A_{\Sfam}^{r,\xi}$, with the implicit
constant also depending on the number of models and the uniform
sparse-domination constant.


\end{document}